\documentclass{article}
\usepackage{graphicx} % Required for inserting images
\usepackage[margin=1in]{geometry}
\usepackage{amsmath}
\usepackage{amsthm}
\usepackage{amssymb}
\usepackage{mathtools}
\usepackage{thmtools}
\usepackage{xcolor}
\usepackage{float}
\usepackage[shortlabels]{enumitem}
\setlist[itemize]{topsep=2ex,itemsep=0pt}
\setlist[enumerate]{topsep=2ex,itemsep=0pt}
\usepackage{caption}
\usepackage{hyperref}
\usepackage{microtype,lmodern}
\usepackage[noabbrev,capitalise]{cleveref}
\crefname{lemma}{Lemma}{Lemmas}
\crefname{theorem}{Theorem}{Theorems}
\crefname{corollary}{Corollary}{Corollaries}
\crefname{proposition}{Proposition}{Propositions}
\crefname{conjecture}{Conjecture}{Conjectures}
\crefname{claim}{Claim}{Claims}
\crefname{equation}{}{}

\usepackage[english]{babel}
\usepackage[longnamesfirst,numbers,sort&compress]{natbib}
\makeatletter
\def\NAT@spacechar{~}
\makeatother
\usepackage{csquotes}

\renewcommand{\thefootnote}{\fnsymbol{footnote}}
\usepackage[bottom]{footmisc}

\declaretheorem[numberwithin=section]{theorem}
\declaretheorem[numbered=no, name=Theorem]{theorem*}
\declaretheorem[
    numberwithin=section, 
    name=Claim
]{claim}
\declaretheorem[numberlike=theorem]{lemma,proposition,problem,corollary,conjecture}

\DeclarePairedDelimiter{\abs}{|}{|}

\DeclareMathOperator{\had}{had}

\newcommand{\Cc}{\mathcal{C}}

\newcommand{\triqed}{\hfill\ensuremath{\vartriangleleft}}

\title{\textbf{Hadwiger's Conjecture for $\{\text{co-claw}, \text{co-gem}\}$-free graphs\\ and $\{\text{fork}, \text{antifork}\}$-free graphs}}
\author{Daniel Carter\footnotemark[1]\and Jung Hon Yip\footnotemark[2]}
\date{\today}

\usepackage{pgfplots}
\usetikzlibrary{arrows.meta}
\usepackage{tikz, graphicx, tikz-cd}
\usepackage{float}

\tikzset{
    dot/.style = {circle, fill, minimum size=#1,
            inner sep=0pt, outer sep=0pt},
    dot/.default = 4pt
}

\begin{document}

\footnotetext[1]{Princeton University, Princeton, USA. \texttt{dc65@princeton.edu}.}

\footnotetext[2]{Monash University, Melbourne, Australia. Supported by a Monash Graduate Scholarship. \texttt{junghon.yip@monash.edu}.}

\maketitle

\begin{abstract}
We prove Hadwiger's Conjecture for $\{\text{co-claw}, \text{co-gem}\}$-free graphs and $\{\text{fork}, \text{antifork}\}$-free graphs, where the \textit{co-claw} is the disjoint union of a triangle and a vertex, the \textit{co-gem} is the disjoint union of a 4-vertex path and a vertex, the \textit{fork} is obtained from $K_{1,3}$ by subdividing one of the edges, and the \textit{antifork} is the complement of the fork.  The $\{\text{co-claw}, \text{co-gem}\}$-free graphs include the complements of line graphs of triangle-free multigraphs, and thus our results imply Hadwiger's Conjecture for these graphs. In fact, we prove a stronger result: every $\{\text{co-claw}, \text{co-gem}\}$-free graph $G$ has a $K_{\chi(G)}$-model where each branch set has size at most 2, and every $\{\text{fork}, \text{antifork}\}$-free graph $G$ has a $K_{\chi(G)}$-model where at most one branch set has size greater than 2.
\end{abstract}

\renewcommand{\thefootnote}
{\arabic{footnote}}

\section{Introduction}
Graphs in this paper are finite, loopless and with no parallel edges; graphs allowing parallel edges will be called \textit{multigraphs}. A graph $H$ is a \textit{minor} of a graph $G$ if $H$ can be obtained from $G$ by deleting vertices and edges, and by contracting edges. 
The maximum integer $n$ such that the complete graph $K_n$ is a minor of $G$ is called the \textit{Hadwiger number} of $G$, denoted $\had(G)$. Recall that $\omega(G)$ is the size of the largest clique in $G$, and $\chi(G)$ is the chromatic number of $G$. We use the notation $A\sqcup B$ to denote the disjoint union of two graphs $A$ and $B$, and $\overline{G}$ for the complement of $G$. For a graph $H$, we say that $G$ is \textit{$H$-free} if no induced subgraph of $G$ is isomorphic to $H$. Likewise for a set $S$ of graphs, $G$ is \textit{$S$-free} if $G$ is $H$-free for all $H \in S$. A class of graphs is \textit{hereditary} if it is closed under taking induced subgraphs. For any undefined graph theory notation, see Diestel~\cite{Diestel05}.

One of the deepest unsolved problems in graph theory is Hadwiger's Conjecture:
\begin{conjecture}[\cite{Hadwiger43}]
For all graphs $G$, $\had(G)\ge \chi(G)$.
\end{conjecture}
See~\cite{Kawa15,Toft96,SeymourHC} for surveys on Hadwiger's Conjecture in general. Recently, there has been interest in proving Hadwiger's Conjecture for various hereditary classes; see~\cite{CV20,YipHad26} for surveys and~\cite{Carter22,PST03,reed_hadwigers_2004,SongBrianHadwigerHole17} for other results on Hadwiger's Conjecture in hereditary classes.

The \textit{claw} is the complete bipartite graph $K_{1,3}$, and the \textit{co-claw} is the complement of the claw, which is $K_3\sqcup K_1$. The \textit{gem}, also known as the 4-fan, is the graph on five vertices formed by adding a vertex adjacent to all of the vertices in a 4-vertex path; the \textit{co-gem} is the complement of the gem, which is $P_4\sqcup K_1$. The \textit{fork} is obtained from the claw by subdividing one of the edges, and the \textit{antifork} is the complement of the fork (see \cref{fig:graphs0}).
In this paper, we prove Hadwiger's Conjecture for $\{\text{co-claw}, \text{co-gem}\}$-free graphs and $\{\text{fork}, \text{antifork}\}$-free graphs. Proving Hadwiger's Conjecture for $\{\text{fork}, \text{antifork}\}$-free graphs was a problem proposed by the first author at the 2025 Barbados Graph Theory Workshop~\cite{barbados}.

\begin{figure}[htb!]
    \centering
    \includegraphics{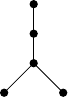}
    \qquad \quad
    \includegraphics{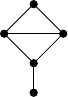}
    \qquad \quad
    \includegraphics{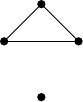}
        \qquad \quad
    \includegraphics{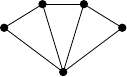}
    \qquad \quad
    \includegraphics{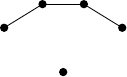}
    \caption{Left to right: the fork, antifork, co-claw, gem, and co-gem.} 
    \label{fig:graphs0}
\end{figure}

\begin{theorem}\label{thm:fork}
    If $G$ is $\{\text{fork}, \text{antifork}\}$-free, then $\had(G) \geq \chi(G)$.
\end{theorem}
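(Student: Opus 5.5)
We argue by induction on $|V(G)|$, proving the stronger statement from the abstract: $G$ has a $K_{\chi(G)}$-model in which at most one branch set has more than two vertices. The class of $\{\text{fork},\text{antifork}\}$-free graphs is hereditary and closed under complementation, so the induction may use both facts.

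\textbf{Reductions.} First we dispose of the easy cases.
\begin{itemize}
\item If $G$ is disconnected, we apply induction to a component of maximum chromatic number.
\item If $\overline{G}$ is disconnected, then $G$ is the join of $G_1$ and $G_2$. Here $\chi(G)=\chi(G_1)+\chi(G_2)$, and the union of models of $K_{\chi(G_1)}$ in $G_1$ and $K_{\chi(G_2)}$ in $G_2$ is a model of $K_{\chi(G)}$. This could produce two large branch sets. We avoid that by keeping the large branch set only on one side; on the other side we strengthen the induction to give all branch sets of size $\le 2$ whenever that side has a suitable form, for instance when it has independence number at most $2$.
\end{itemize}
So we may assume that $G$ and $\overline{G}$ are both connected. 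We may also assume that $G$ is vertex-critical, since otherwise we pass to a critical induced subgraph.

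\textbf{Basic induction step.} Find a set $X\subseteq V(G)$ such that:
\begin{itemize}
\item $G[X]$ is connected;
\item $X$ dominates $G-X$;
\item $\chi(G-X)\ge \chi(G)-1$.
\end{itemize}
Then a model in $G-X$ extends by the branch set $X$. To keep branch sets small we aim for $X$ to be an edge $uv$. The natural way to obtain $\chi(G-u-v)\ge\chi(G)-1$ is to pick $u,v$ so that some colour class of an optimal colouring of $G-u-v$ could absorb neither of them. Equivalently, we look for a pair of nonadjacent vertices $a,b$ with a common neighbour and argue via a colouring in which $a,b$ share a colour. The alternative is the classical Plummer--Stiebitz--Toft/Seymour-type approach. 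Take a maximal collection of disjoint triples $\{x,y,z\}$ that induce a path $x$-$y$-$z$ with $x,z$ nonadjacent, in the regime where $\alpha(G)\le 2$. Each such triple colours with two colours while contributing one branch set of size $\le 2$.

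\textbf{Splitting on $\alpha(G)$.} We then split according to whether $\alpha(G)\ge 3$.

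If $\alpha(G)\ge 3$, choose a stable triple $\{a,b,c\}$. Fork-freeness says that for any vertex $v$ adjacent to all of $a,b,c$, every neighbour of $v$ outside $N(a)\cup N(b)\cup N(c)$ is adjacent to $v$ only in a constrained way. Antifork-freeness constrains $\overline{G}$ symmetrically. Using connectivity of $G$ and $\overline{G}$, we want to show that $G$ contains a stable set $S$ meeting every maximum clique of every optimal colouring-critical piece. Concretely, we want $\chi(G-S)=\chi(G)-1$, together with a single connected set $Y\supseteq S$ that dominates $G-Y$ and has $\chi(G-Y)\ge\chi(G)-1$. The set $Y$ is the one allowed large branch set. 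The remaining graph should then have $\alpha\le 2$, or at least be co-claw-free enough, so that the pairs argument finishes with branch sets of size $\le 2$.

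If $\alpha(G)\le 2$, we have $\chi(G)\ge |V(G)|/2$. In this case we greedily build a matching of branch sets of size $\le 2$, using that the antifork restricts how a nonedge can fail to be dominated by an edge.

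\textbf{Main obstacle.} I expect the hard step to be the structural analysis in the case $\alpha(G)\ge3$. There one must show that the vertices complete or anticomplete to a stable triple force $G$ to decompose: either a homogeneous set exists, which we handle by substitution, or there is a dominating connected set $Y$ whose removal lowers $\chi$ by at most one. I would first try to prove that the neighbourhood of a stable triple's common neighbour is $P_3$-free, so a disjoint union of cliques. From that the dominating set $Y$ should be explicit.
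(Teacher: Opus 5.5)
Your proposal has a genuine gap. Everything after the two easy reductions is a list of things you \emph{want} to be true. The idea that makes the connected, co-connected case tractable is missing.

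The paper does not analyse stable triples directly. It invokes the Chudnovsky--Cook--Seymour structure theorem: $G$ or $\overline{G}$ is disconnected, has adjacent simplicial twins, is candled, or is the line graph of a simple triangle-free graph. It then kills each outcome for a minimal counterexample to the semi-small statement:
\begin{itemize}
\item Criticality forbids simplicial vertices and nonadjacent twins, and this also rules out both candled outcomes.
\item If $G$ is a line graph, Vizing gives $\chi(G)\le\omega(G)+1$. The singletons of a maximum clique $C$, together with the unique component of $G-C$, then form the model with one large branch set.
\item If $\overline{G}$ is the line graph of a triangle-free graph, then $G$ is $\{\text{co-claw},\text{co-gem}\}$-free. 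For that class the paper proves $\had_2(G)\ge\chi(G)$ via the Strong Perfect Graph Theorem, by excluding $C_5$ and every odd antihole from a minimal counterexample.
\end{itemize}

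The last family already lies inside your $\alpha(G)\ge 3$ branch, since its independence number is the maximum degree of the underlying triangle-free graph. So your unproved $S$, your connected dominating $Y\supseteq S$ with $\chi(G-Y)\ge\chi(G)-1$, and your ``co-claw-free enough'' remainder would have to absorb the whole odd-antihole argument.

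The lemma you propose to start from is false even after your reductions. Let $H$ be the tree with edges $xa,xb,xc,cy,yz,zu,uw$, and let $G=\overline{L(H)}$. Then $G$ is fork- and antifork-free, because line graphs of triangle-free graphs are claw-free and diamond-free. $G$ is also connected and co-connected. The vertex $yz$ is adjacent to the stable triple $\{xa,xb,xc\}$, yet $xa$--$uw$--$xb$ is an induced $P_3$ in its neighbourhood.

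In the $\alpha(G)\le 2$ branch, fork-freeness is automatic, so you are asserting a case of Seymour's conjecture that $\had_2(G)\ge\chi(G)$ when $\alpha(G)\le 2$. A one-line ``greedy'' argument does not establish this. Your triple-based alternative spends two colours per branch set, so it cannot reach $\chi(G)$.

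Your join step is also incomplete. Both sides may genuinely need a large branch set: take the join of two copies of $C_7$, which is fork- and antifork-free, with $\had_2(C_7)=2<3=\chi(C_7)$. So no strengthening of the form ``one side has a small model'' can work. In fact no strengthening is needed. Let $S_1$ and $T_1$ be the large branch sets on the two sides. Pick distinct $s_1,s_2\in S_1$ and $t_1,t_2\in T_1$, and replace $S_1,T_1$ by $\{s_1,t_1\}$ and $\{s_2,t_2\}$. Every vertex on one side is adjacent to every vertex on the other, so these pairs are connected, adjacent to each other, and adjacent to every remaining branch set.
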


\begin{theorem}\label{thm:gem}
    If $G$ is $\{\text{co-claw}, \text{co-gem}\}$-free, then $\had(G) \geq \chi(G)$.
\end{theorem}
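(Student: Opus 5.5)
We would work in the complement. Write $H=\overline{G}$; then $H$ is claw-free and gem-free, and $\chi(G)$ is the minimum number of cliques of $H$ needed to cover $V(H)$. A branch set of size at most $2$ in $G$ is either a single vertex or a pair $\{a,b\}$ with $ab\notin E(H)$. Two branch sets touch in $G$ unless every vertex of one is $H$-adjacent to every vertex of the other. We would prove the stronger statement from the abstract: $G$ has a $K_{\chi(G)}$-model with all branch sets of size at most $2$. The argument is by induction on $|V(G)|$, taking $G$ to be a minimum counterexample.

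\textbf{Step 1 (reductions).} If $H$ is disconnected, then $G$ is the join of the graphs $G_i$ complementary to the components of $H$. In that case $\chi(G)=\sum_i \chi(G_i)$, and the union of the inductively obtained models for the $G_i$ is a model for $G$, because every vertex of $G_i$ is adjacent to every vertex of $G_j$ for $i\neq j$. So we may assume $H$ is connected. We may also assume $H$ is not a clique, since otherwise $\chi(G)=1$.

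Two local facts from the forbidden subgraphs are used throughout. Claw-freeness of $H$ gives $\alpha(H[N_H(v)])\le 2$ for every $v$. Gem-freeness gives that $H[N_H(v)]$ is $P_4$-free, so each $H[N_H(v)]$ is a cograph with independence number at most $2$; in particular its complement is disconnected or the graph has a simple join structure.

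\textbf{Step 2 (peeling off a colour class).} Choose a maximal clique $K$ of $H$, which is an independent set of $G$, together with a vertex $v\in K$, chosen in a way specified below. Then $\chi(G-K)\ge \chi(G)-1$. By induction, $G-K$ has a model $\mathcal{B}$ of $K_{\chi(G)-1}$ with branch sets of size at most $2$. We want to add $\{v\}$ as a new branch set. In $G$, the vertex $v$ is adjacent to every vertex outside $N_H[v]$. Hence $\{v\}$ fails to touch a branch set $B\in\mathcal{B}$ only when $B\subseteq N_H(v)\setminus K$. Such a $B$ is either a single vertex or an $H$-non-adjacent pair inside $N_H(v)$.

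\textbf{Step 3 (rerouting; main obstacle).} The hard part is modifying $\mathcal{B}$ so that no branch set lies inside $N_H(v)\setminus K$. First we choose $v$ and $K$ to minimise $|N_H(v)\setminus K|$; for example, $v$ can be taken simplicial in $H$ if such a vertex exists. Next we use $\alpha(H[N_H(v)])\le 2$: at most two pairwise $G$-touching-free obstacles can occur. For each bad branch set $B$, we would try to swap one of its vertices with a vertex $u\in K\setminus\{v\}$, or to enlarge $\{v\}$ into a pair $\{v,w\}$ with $w$ chosen from outside $N_H[v]$ among vertices not used by $\mathcal{B}$. Gem-freeness should guarantee that such a $w$ is non-adjacent in $H$ to every bad $B$. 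Vertices unused by $\mathcal{B}$ should exist because $\chi(G)-1$ branch sets of size at most $2$ need not cover $V(G)\setminus K$; otherwise we would argue via counting. Where this fails, we expect a rigid structure. One such structure is $N_H(v)\setminus K$ inducing two disjoint cliques, which forces $H$ to be close to a line graph of a triangle-free graph. For that structure we would instead give a direct model built from a maximum matching of $G$. The matching pairs are $H$-non-adjacent pairs, and a Gallai–Edmonds or Tutte–Berge type count should show that the number of such pairs plus the leftover singletons is at least the clique-cover number of $H$.
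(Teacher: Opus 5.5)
\textbf{There is a genuine gap.} Step 3 is the entire content of the theorem, and it is only sketched with hedged claims (``should guarantee'', ``should exist'', ``we expect'', ``should show''). The local moves you propose are not sufficient in general.

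Take $G=C_5$ with vertices $x_1,\dots,x_5$ in cyclic order. Every maximal clique of $H$ is an $H$-edge, say $K=\{x_1,x_3\}$. Since $x_2$ is isolated in $G-K$, the only small $K_2$-model of $G-K$ is $\mathcal{B}=\{\{x_4\},\{x_5\}\}$. For $v=x_1$ the bad set is $\{x_4\}$, and both of your moves fail:
\begin{itemize}
\item Swapping $x_4$ for $x_3\in K$ gives $\{x_3\},\{x_5\}$, which do not touch in $G$.
\item The only unused $w\notin N_H[x_1]$ is $x_2$, and it has no $G$-neighbour in $\{x_4\}$, so $\{x_1,x_2\}$ does not touch $\{x_4\}$.
\end{itemize}
The case $v=x_3$ is symmetric, and by symmetry your rule of minimising $|N_H(v)\setminus K|$ does not help. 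A correct $K_3$-model is $\{x_1,x_2\},\{x_3,x_4\},\{x_5\}$. It needs a vertex of $K$ moved into an old branch set and an unused vertex, simultaneously. Your scheme gives no mechanism for such global changes, and a general rerouting lemma would have to handle this case.

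Your structural analysis is also off. Suppose $K$ is maximal and $a,b\in N_H(v)\setminus K$ are $H$-non-adjacent. Pick $k_a,k_b\in K\setminus\{v\}$ with $ak_a,bk_b\notin E(H)$. Claw-freeness at $v$ forces $ak_b,bk_a\in E(H)$, so $k_a\neq k_b$. Then $a\,k_b\,k_a\,b$ is an induced $P_4$ in $N_H(v)$, which together with $v$ is a gem. Hence $N_H(v)\setminus K$ is always a clique of $H$. So there is at most one bad branch set, and it is a singleton. The ``two disjoint cliques'' structure you planned to treat separately never occurs. The matching fallback would not work anyway: two $G$-edges $\{a,b\},\{c,d\}$ fail to touch exactly when $H[a,b,c,d]$ is an induced $C_4$. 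A Tutte--Berge count of pairs therefore cannot certify a clique model.

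\textbf{The missing idea.} Peel off a piece whose new branch sets are adjacent in $G$ to \emph{every} remaining vertex. Such sets extend whatever model induction returns, with no rerouting. The paper gets this in a minimal counterexample as follows.
\begin{itemize}
\item For an induced $C_5$ on $x_1,\dots,x_5$, there are two cases. Either every outside vertex sees both $\{x_1,x_2\}$ and $\{x_3,x_4\}$: add these two pairs to a model of $G-\{x_1,\dots,x_4\}$, losing only chromatic number $2$. Or some outside $y$ has $N_G(y)\cap X=\{x_1,x_3\}$; co-claw/co-gem-freeness then makes $y$ a twin of $x_2$, so $y$ can be deleted.
\item For an induced $\overline{C_{2k+1}}$, with shorter odd antiholes excluded by induction on $k$, every outside vertex sees each of $\{x_1,x_4\},\{x_2,x_5\},\{x_3,x_6\}$.
\item Odd holes of length at least $7$ contain a co-gem.
\end{itemize}
So a minimal counterexample is perfect by the Strong Perfect Graph Theorem, and $\had_2(G)\ge\omega(G)=\chi(G)$. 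A single branch set $\{v\}$ taken from one colour class lacks this universality, which is exactly why conflicts arise in your Step 3.
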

We now motivate \cref{thm:fork,thm:gem}. Since Hadwiger's Conjecture has been proven for line graphs of multigraphs \cite{reed_hadwigers_2004,CO08}, it is natural to ask if it also holds for complements of line graphs. As a first step, our results imply Hadwiger's Conjecture for complements of line graphs of triangle-free multigraphs (see \cref{cor:multigraphs}).

Hadwiger's Conjecture remains difficult for hereditary classes characterized by a single forbidden induced subgraph. For example, despite having a structure theorem for claw-free graphs \citep{CP08ClawFree}, Hadwiger's Conjecture remains open for these graphs. In fact, even the special case of Hadwiger's Conjecture for $\overline{K_3}$-free graphs (these are graphs with independence number at most $2$, and every $\overline{K_3}$-free graph is claw-free) is open despite significant attention \cite{SeymourHC,CP08ClawFree,PST03,Blasiak07,CS12,NS26a,YipHad26}.\footnote{Seymour \cite{SeymourHC} writes of this case: ``This seems to me to be an excellent place to look for a counterexample. My own belief is, if it is
true for graphs with stability number two then it is probably true in general.''}
To date, the best partial result by Chudnovsky and Fradkin~\cite{chudnovsky_approximate_2010} is that $\had(G)\ge (2/3)\chi(G)$ if $G$ is claw-free. 
As another example, Hadwiger's Conjecture remains open for triangle-free graphs \cite{barbados17,DelcourtPostle24,DvorakYepremyanTriangleFree19}. There are some encouraging signs for triangle-free graphs: a breakthrough result of Delcourt and Postle \cite{DelcourtPostle24} shows that there is a constant $c>0$ such that every triangle-free graph $G$ satisfies $\had(G)\geq c\cdot\chi(G)$,
improving on previous work by Dvo{\v{r}}{\'a}k and Yepremyan \cite{DvorakYepremyanTriangleFree19}. This demonstrates the difficulty of proving Hadwiger's Conjecture in classes characterized by a single forbidden induced subgraph. 

However, what if we forbid two induced subgraphs?
In this case there are several positive results in the literature \cite{Carter22,PST03,reed_hadwigers_2004,SongBrianHadwigerHole17}. We mention a few results about fork-free graphs, which are a natural generalization of claw-free graphs.
Hadwiger's Conjecture holds for $\{\text{fork}, K_5 - e\}$-free graphs and $\{\text{fork}, \text{HVN}\}$-free graphs (see \cref{fig:graphs1}).
In these two classes, every graph $G$ satisfies $\chi(G) \le \omega(G) + 1$ \cite{randerath1998vizing}, and more generally, Hadwiger's Conjecture holds for hereditary classes $\Cc$ in which $\chi(G) \le \omega(G) + 1$ for all $G \in \Cc$~\cite{CV20}. However, there are $\{\text{fork}, \text{antifork}\}$-free graphs with $\chi(G) > \omega(G) + 1$. Thus, other techniques are required to prove Hadwiger's Conjecture for $\{\text{fork}, \text{antifork}\}$-free graphs.
We remark that any strengthening of \cref{thm:fork,thm:gem} by dropping one forbidden induced subgraph would imply a known hard case of Hadwiger's Conjecture. 
For example, Hadwiger's Conjecture for fork-free or co-gem-free graphs would imply it for $\overline{K_3}$-free graphs, while Hadwiger's Conjecture for antifork-free or co-claw-free graphs would imply it for triangle-free graphs.

Our primary tool for proving \cref{thm:fork} is the structure theorem for $\{\text{fork}, \text{antifork}\}$-free graphs by Chudnovsky, Cook, and Seymour (\cref{thm:structure}), which states that every $\{\text{fork}, \text{antifork}\}$-free graph satisfies one of eight outcomes.
It is straightforward to see that a minimal $\{\text{fork}, \text{antifork}\}$-free counterexample to Hadwiger's Conjecture cannot satisfy seven of the eight outcomes; we prove this in \cref{sec:fork}. 
The eighth outcome, when $G$ is the complement of a line graph of a simple triangle-free graph, is handled in \cref{sec:gem}. In fact, we prove a strengthening. We prove Hadwiger's Conjecture for $\{\text{co-claw}, \text{co-gem}\}$-free graphs (\cref{thm:gem}), which implies Hadwiger's Conjecture for the complements of line graphs of triangle-free multigraphs. Our proof uses the Strong Perfect Graph Theorem \cite{spgt}.

\begin{figure}[htb!]
    \centering
    \includegraphics{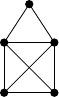}
    \qquad \quad
    \includegraphics{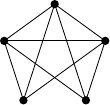}
    \qquad \quad 
    \includegraphics{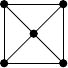}
    \qquad \quad 
    \includegraphics{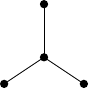}
    \caption{Left to right: the HVN, $K_5 - e$, $W_4$, and the claw.}
    \label{fig:graphs1}
\end{figure}

A \textit{minor model} of a graph $H$ in a graph $G$ (or \textit{$H$-model} for short) is a collection $M=\{S_x: x \in V(H)\}$ of pairwise disjoint non-empty connected subsets of $V(G)$ such that there is an edge between a vertex of $S_x$ and a vertex of $S_y$ whenever $xy\in E(H)$. The sets $S_x$ are known as the \textit{branch sets} of $M$. Note that $H$ is a minor of $G$ if and only if $G$ contains a $H$-model.
An $H$-model is \textit{small} if every branch set has size 1 or 2, and \textit{semi-small} if at most one branch set has size greater than $2$. Define $\had_2(G)$ to be the maximum integer $t$ such that $G$ has a small $K_t$-model and $\had_2^+(G)$ to be the maximum integer $t$ such that $G$ has a semi-small $K_t$-model.
It is straightforward to see that $\had(G)\ge \had_2^+(G)\ge \had_2(G)\ge \omega(G)$. We in fact prove a strengthening of \cref{thm:gem,thm:fork}:  
\begin{theorem}\label{thm:strong_fork}
If $G$ is $\{\text{fork}, \text{antifork}\}$-free, then $\had_2^+(G) \geq \chi(G)$.
\end{theorem}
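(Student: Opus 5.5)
We argue by taking a minimal counterexample: let $G$ be a $\{\text{fork}, \text{antifork}\}$-free graph with $\had_2^+(G) < \chi(G)$ and $|V(G)|$ minimum. Since the class is hereditary and $\had_2^+$ is monotone under induced subgraphs, every proper induced subgraph $H$ of $G$ satisfies $\had_2^+(H)\ge\chi(H)$.

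We first record standard reductions valid for any minimal counterexample in a hereditary class where the parameter behaves well.

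(i) $G$ is connected. Otherwise the chromatic number is attained on one component.

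(ii) $G$ is anticonnected, i.e.\ $\overline{G}$ is connected. If $V(G)=A\cup B$ is complete to each other, then $\chi(G)=\chi(A)+\chi(B)$. A semi-small model in $G[A]$ and a small model in $G[B]$ combine into a $K_{\chi(A)+\chi(B)}$-model. To keep the combined model semi-small, we apply the stronger statement $\had_2(G[B])\ge\chi(G[B])$ on one side where available. Alternatively, we note that every branch set is complete to the other side, so any large branch set in $B$ can be shrunk to a single vertex. Indeed, a branch set of $G[B]$ is joined to all of $A$ anyway, but it still needs adjacency within $B$. So the cleaner route is to keep both models and accept up to two large branch sets, unless one side is handled by \cref{thm:gem}'s stronger small-model form. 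I expect to refine this by choosing which side contributes the big set.

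(iii) $G$ has no clique cutset and no pair of twins. With twins $u,v$ one can colour $G-v$ and give $v$ a colour. Also, $\chi(G)>\omega(G)+1$ may be assumed, since otherwise $\had_2(G)\ge\chi(G)$ by the argument of \cite{CV20}, which uses small models when $\chi\le\omega+1$.

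Next we invoke the structure theorem of Chudnovsky, Cook and Seymour (\cref{thm:structure}): $G$ satisfies one of eight outcomes. For seven of these, the outcome is a decomposition or a basic class incompatible with the reductions above, or of a form where $\chi\le\omega+1$ or a direct colouring argument applies. These include a nontrivial homogeneous set or clique cutset, a join or disjoint union, a small or bounded structure such as a graph with bounded $\chi-\omega$, and complements of bipartite-like structures. For a homogeneous set $X$, we contract a semi-small model in $G[X]$ compatible with a model of $G$ with $X$ replaced by a clique of size $\chi(G[X])$. The remaining (eighth) outcome is that $G$ is the complement of the line graph of a simple triangle-free graph. Such a graph is $\{\text{co-claw},\text{co-gem}\}$-free: a co-claw corresponds to a claw-complement structure forcing a triangle in the host, and a co-gem to a $P_4\sqcup K_1$ pattern likewise. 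So by the small-model strengthening of \cref{thm:gem}, namely $\had_2(G)\ge\chi(G)$, we obtain $\had_2^+(G)\ge\had_2(G)\ge\chi(G)$.

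The main obstacle is the eighth outcome, i.e.\ proving $\had_2(G)\ge\chi(G)$ for $\{\text{co-claw},\text{co-gem}\}$-free $G$. For this I would use the Strong Perfect Graph Theorem. If $G$ is perfect, then $\chi=\omega$ and we are done. Otherwise $G$ contains an odd hole or odd antihole; co-claw-freeness excludes long odd holes beyond $C_5$-type configurations. We remove a $C_5$ or antihole $W$ (on which $\chi(W)=\omega(W)+1$ and a small $K_{\chi(W)}$-model exists via pairs). We then argue that the vertices outside $W$ are nearly complete to $W$, by co-claw/co-gem-freeness, and proceed inductively: combine a small model of $G-W$ with one of $W$, using $\chi(G)\le\chi(G-W)+\chi(W)$. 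The delicate part is establishing sufficient adjacency between $W$ and the rest so that the branch sets of the two models are pairwise adjacent. Among the seven easy outcomes, the join/homogeneous-set bookkeeping for semi-smallness is the other point needing care.
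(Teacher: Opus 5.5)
Your overall skeleton matches the paper's: a minimal counterexample, \cref{thm:structure}, and \cref{thm:strong_gem} for the outcome where $\overline{G}$ is the line graph of a triangle-free graph. However, two parts are not actually carried out.

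First, the anticonnected case is left open. You end with possibly two branch sets of size greater than $2$, and you only suggest shrinking one of them (which, as you note, destroys adjacency inside $B$) or invoking a small-model bound that is not available for $G[A]$ or $G[B]$. The missing idea is a crosswise swap. Let $S_1$ and $T_1$ be the large branch sets of the semi-small models in $G[A]$ and $G[B]$. Pick distinct $s_1,s_2\in S_1$ and $t_1,t_2\in T_1$, and replace $S_1,T_1$ by $\{s_1,t_1\}$ and $\{s_2,t_2\}$. Since $A$ is complete to $B$, each new set is connected and meets both sides, so it touches every other branch set. The edge $s_1t_2$ joins the two new sets. This even gives a small $K_{\chi(A)+\chi(B)}$-model.

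Second, the seven ``easy'' outcomes are not checked against what \cref{thm:structure} actually says. Its outcomes are that $G$ or $\overline{G}$ is disconnected, has a pair of adjacent simplicial twins, is candled, or is the line graph of a simple triangle-free graph. There is no homogeneous-set, bounded-$(\chi-\omega)$ or ``bipartite-like'' outcome; moreover, substituting models into a homogeneous set can create many branch sets of size $3$. In particular, candled graphs are never handled.

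They do follow from reductions close to yours, but you need the right ones. Your twin argument only works for \emph{non-adjacent} twins. You also need that there is no simplicial vertex $v$: criticality gives $\deg(v)\ge\chi(G)-1$, so $N[v]$ would be a clique of size $\chi(G)$. With these two reductions:
\begin{itemize}
\item If $G$ is candled, two vertices of a base set $Z_i$ are non-adjacent twins. So all $|Z_i|=1$, and then the candle vertices are simplicial.
\item If $\overline{G}$ is candled, two vertices of a candle set $Y_i$ are non-adjacent twins in $G$. So all $|Y_i|=1$, and then the base vertices are simplicial in $G$.
\end{itemize}

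Your claim that $\chi\le\omega+1$ gives $\had_2(G)\ge\chi(G)$ is false. $C_7$ is fork- and antifork-free with $\chi=3$ but $\had_2(C_7)=2$ (cf.\ \cref{prop:largegirth}). The argument of \cite{CV20} takes a maximum clique $C$. Criticality forces $G\setminus C$ to be connected, with every $c\in C$ having a neighbour in it, and $G\setminus C$ is then used as one branch set. That yields only a semi-small model, which is exactly why the theorem is stated for $\had_2^+$. Together with Vizing's theorem, this is what disposes of the outcome where $G$ is a line graph.

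For the last outcome, you can simply cite \cref{thm:strong_gem} rather than treat it as the main obstacle. You do need to justify that it applies:
\begin{itemize}
\item $\overline{L(H)}$ is co-claw-free because $L(H)$ is claw-free.
\item $\overline{L(H)}$ is co-gem-free because $L(H)$ is gem-free when $H$ is triangle-free (cf.\ \cref{prop:forbidden2}).
\end{itemize}
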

\begin{theorem}\label{thm:strong_gem}
If $G$ is $\{\text{co-claw}, \text{co-gem}\}$-free, then $\had_2(G) \geq \chi(G)$.
\end{theorem}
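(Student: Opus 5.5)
We argue by taking a minimal counterexample $G$ to \cref{thm:strong_gem} and working in the complement. Since the class is hereditary, minimality gives that $G$ is vertex-critical: $\chi(G-v)=\chi(G)-1$ for every $v$. In particular $G$ is connected and $\overline{G}$ has no dominating vertex, since a dominating vertex of $G$ could be removed and added back as a singleton branch set. Write $H=\overline{G}$. Then $H$ is $\{\text{claw},\text{gem}\}$-free, a proper colouring of $G$ is a partition of $V(H)$ into cliques of $H$, and a small $K_t$-model of $G$ corresponds to the following structure in $H$: a family of singletons and pairs that are pairwise disjoint, where each pair is a non-edge of $H$, and where any two members are not completely joined in $H$.

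The first step is to handle the perfect case using the Strong Perfect Graph Theorem. If $G$ is perfect then $\chi(G)=\omega(G)\le\had_2(G)$, so $G$ must contain an odd hole or an odd antihole. We then use claw-freeness and gem-freeness of $H$ to pin this down. An odd antihole of length at least $7$ in $H$ contains a gem. An odd hole of length at least $7$ in $G$ is an odd antihole in $H$, so we are reduced to the case where $H$ contains an odd hole $C$, where $C_5$ is self-complementary. We analyse how the remaining vertices of $H$ attach to $C$. Claw-freeness forces each vertex to have at most four neighbours on $C$, with all of its neighbours on $C$ forming at most two intervals. Gem-freeness forbids a vertex with four consecutive neighbours on $C$. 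Together these give a short list of attachment types: no neighbours, one vertex, an edge, two intervals, or three consecutive vertices.

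The heart of the argument is a counting step. We want to show $\had_2(G)\ge\chi(G)$ directly by choosing a maximum matching $M$ in $G$ (that is, of non-edges of $H$) among vertices that pairwise "see each other", and then comparing $|V|-|M|$ with a clique cover of $H$. A cleaner route is to adapt the Plummer--Stiebitz--Toft approach for independence number two. We use criticality to get $\chi(G)\le \lceil |V(G)|/2\rceil$-type bounds via the structure of $H$, and we build a small model whose pairs come from an induced $C_5$ or from non-adjacent pairs in $H$ whose union meets every clique of $H$. Concretely, we will show that in a critical $G$, either $H$ has a perfect matching of non-edges that pairwise form a connected model, which gives $\had_2(G)\ge |V|/2\ge\chi(G)$, or we can find a vertex or edge whose removal drops $\chi$ appropriately and extend a model of the smaller graph by induction.

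The main obstacle is this final step: proving that in the imperfect, critical case the pairs can be chosen so that every two pairs are adjacent in $G$, meaning no two non-edges of $H$ are completely joined in $H$. We would first try induction. Remove an induced $C_5$ (or an odd-hole's vertex set) $X$ from $G$. Since $\chi(G[X])=3$ with $|X|=5$, a small $K_3$-model inside $X$ exists using pairs. We would then argue $\chi(G)\le \chi(G-X)+3$ and, using the attachment classification together with the absence of claws and gems in $H$, that every branch set of a small model of $G-X$ has a $G$-neighbour in each of the three branch sets in $X$. If this fails for some branch set, we plan to re-route it by swapping a vertex of $C$ into it, using the classified attachment types.
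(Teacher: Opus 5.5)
Your reduction is sound as far as it goes. Minimality, the Strong Perfect Graph Theorem, and the fact that odd holes of length at least $7$ in $G$ contain a co-gem leave exactly two configurations to exclude: an induced $C_5$ and an induced $\overline{C_{2k+1}}$ ($k\ge 3$) in $G$. But excluding them is the whole content of the theorem, and the proposal does not do it. The matching/counting and Plummer--Stiebitz--Toft paragraphs are never carried out, and you yourself flag the last step as the main obstacle.

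The one concrete plan you give fails for $C_5$. Suppose you delete all of $X=\{x_1,\dots,x_5\}$ and add a $K_3$-model on $X$, whose branch sets necessarily have sizes $1,2,2$. An outside vertex $y$ with $N_G(y)\cap X=\{x_1,x_3\}$ is permitted by co-claw/co-gem-freeness, and it meets at most two of the three branch sets. The ``re-routing'' is unspecified, and moving a vertex of $X$ breaks the model on $X$. The missing idea is a case split. Co-claw/co-gem-freeness shows that every outside vertex either has exactly two non-consecutive neighbours on $X$, or has no two consecutive non-neighbours on $X$. If no vertex of the first kind exists, delete only $x_1,\dots,x_4$ (the chromatic number drops by at most $2$) and add the pairs $\{x_1,x_2\},\{x_3,x_4\}$. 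If some $y$ has $N_G(y)\cap X=\{x_1,x_3\}$, a further co-claw/co-gem argument gives $N_G(y)=N_G(x_2)$. So $y$ is a non-adjacent twin of $x_2$, $\chi(G-y)=\chi(G)$, and minimality is contradicted.

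For the odd antiholes, your attachment list is too coarse for any fixed choice of pairs. A vertex whose $H$-neighbourhood on the hole is $\{x_1,x_2\}\cup\{x_4,x_5\}$ misses the pair $\{x_1,x_4\}$ in $G$. You need three facts:
\begin{itemize}
\item An interval consisting of a single vertex $x_i$ yields a claw in $H$ centred at $x_i$, so your ``one vertex'' type cannot occur.
\item A three-vertex interval plus any further vertex yields either a gem or a claw centred at $v$.
\item Crucially, two disjoint edges are impossible by induction on $k$. The two arcs together with $v$ form two holes of $H$ of total length $2k+3$, so one is a shorter odd hole of $H$, i.e.\ a shorter odd antihole of $G$, already excluded.
\end{itemize}
Then every outside vertex is non-adjacent in $G$ to at most three consecutive vertices of $X$. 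So you delete only six consecutive vertices $x_1,\dots,x_6$, which induce $\overline{P_6}$ with chromatic number $3$, and add $\{x_1,x_4\},\{x_2,x_5\},\{x_3,x_6\}$; every remaining vertex meets all three pairs, giving the contradiction. Deleting all of $X$ instead would need $k+1$ branch sets on $2k+1$ vertices, hence a singleton, which an outside vertex may well miss.
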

Our results imply Hadwiger's Conjecture for complements of line graphs of triangle-free multigraphs, as we now explain.
The wheel $W_4$ is formed by adding a vertex adjacent to every vertex of $C_4$.
\begin{proposition}\label{prop:forbidden2}
$G$ is the line graph of a triangle-free multigraph if and only if it is $\{\text{claw}, \text{gem}, W_4\}$-free.
\end{proposition}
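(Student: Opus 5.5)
The proof has two directions, and the natural tool for the harder one is Beineke-type reasoning for line graphs of multigraphs. I will use the known characterization (Bermond–Meyer) that a graph is the line graph of a multigraph if and only if it has none of seven forbidden induced subgraphs: the claw, $K_5-e$, and five others. It is convenient to keep in mind that a multigraph line graph is exactly a graph whose edges can be covered by cliques, each vertex lying in at most two of them.

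For the forward direction, let $G=L(H)$ with $H$ a triangle-free multigraph. $G$ is claw-free, since an edge of $H$ has only two ends, so among any three edges meeting it two share an end and are adjacent in $G$. For $W_4$ and the gem, I will analyse the structure of $H$ around the edge $e$ corresponding to the centre vertex. Each neighbour of $e$ in $G$ is an edge of $H$ meeting $e$ at one of its ends $u,v$. In $G$, the set $N(e)$ is covered by two cliques $A_u$ and $A_v$, consisting of the edges through $u$ and through $v$ respectively. An edge of $A_u$ and an edge of $A_v$ that are not parallel to $e$ are adjacent only if they share their other endpoint $w$. That would make $uvw$ a triangle in $H$, which is impossible. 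Edges parallel to $e$ lie in both $A_u$ and $A_v$ and are adjacent to everything in $N(e)$. So $G[N(e)]$ is the union of two cliques, glued along a common set $P$ of universal vertices, with no edges between $A_u\setminus P$ and $A_v\setminus P$. Such a graph contains no induced $P_4$ and no induced $C_4$. It follows that $G$ has no induced gem and no induced $W_4$ centred at $e$.

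For the converse, let $G$ be $\{\text{claw},\text{gem},W_4\}$-free. I will check that each graph in the Bermond–Meyer list other than the claw contains an induced gem or an induced $W_4$. For instance, $K_5-e$ contains $W_4$: the non-adjacent pair together with two further vertices forms a $C_4$, and the fifth vertex is the hub. This check over the finite list is the routine but fiddly step. It gives $G=L(H)$ for some multigraph $H$, and $H$ may be taken without isolated vertices. Now suppose $H$ has a triangle $uvw$ with edges $e_1=uv$, $e_2=vw$, $e_3=wu$. The aim is to find a gem or $W_4$ in $G$, possibly after modifying $H$. If the triangle has extra edges attached (pendant edges or parallel edges), I expect them to give a gem. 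If the triangle is an isolated component, then $G$ has a $K_3$ component, which is also $L(K_{1,3})$, so we replace that component. The main obstacle is the remaining situation, where a triangle cannot simply be rerouted. The cleanest approach is likely a minimal-counterexample argument: pick $H$ with the fewest triangles among all multigraphs with $L(H)\cong G$. One must show that either a triangle can be swapped for a star (Whitney-type ambiguity, which occurs only in small components), or some edge $f$ meets two sides of the triangle and yields a gem. Such an $f$ could be, say, an edge at $u$ not in the triangle together with an edge at $v$ or $w$. For example, edges $f$ at $u$ and $g$ at $w$ with distinct far ends give the path $f,e_1,e_2,g$; with $e_3$ adjacent to all four, this is a gem centred at $e_3$. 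Parallel edges need similar small case checks.
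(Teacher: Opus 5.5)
Your forward direction is correct. The plan for the converse is a reasonable alternative to the paper, which simply cites the characterization from the literature: use a forbidden-subgraph characterization of multigraph line graphs, then remove triangles from a representation $H$ with the fewest triangles. But both steps of the converse have genuine gaps.

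\textbf{The forbidden list.} $K_5-e$ is not on the Bermond--Meyer list; it is on Beineke's list for simple line graphs. It equals $L(H)$ for the triangle-free multigraph $H$ with edges $xy$, $zw$ and three parallel copies of $xz$. Your verification that it contains $W_4$ is also false. $K_5-e$ has a single non-edge, so it has no induced $C_4$: the non-adjacent pair with two further vertices induces a diamond. In fact $K_5-e$ is $\{\text{claw},\text{gem},W_4\}$-free, so with the list you state the reduction fails outright. With the correct list the check must succeed, because each member other than the claw is claw-free and not a multigraph line graph. But this check is the entire content of the step, and it has not been done.

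\textbf{Triangle elimination.} This step is only a set of expectations, and they are wrong. Let $H$ be a triangle $uvw$ plus pendant edges $wy_1,\dots,wy_k$. Then $L(H)$ is $K_{k+2}$ plus a vertex adjacent to exactly two of its vertices. Its complement is a star plus two isolated vertices, so $L(H)$ is $\{\text{claw},\text{gem},W_4\}$-free for every $k$. Parallel copies of the sides do not change this; the complement just becomes complete bipartite plus isolated vertices. So extra edges need not give a gem, the non-uniqueness of $H$ is not confined to small components, and the required move is not a triangle-to-star swap.

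The dichotomy you need is as follows.
\begin{enumerate}
\item Suppose edges $ux$ and $wy$ leave the triangle at two different corners. If $x\neq y$, you get your gem centred at $uw$. If $x=y$, then $uv,vw,wx,xu$ is an induced $C_4$ with hub $uw$, i.e.\ a $W_4$; your proposal omits this case.
\item Otherwise, after relabelling, every edge at $u$ or $v$ has both ends in $\{u,v,w\}$. Delete $u,v$ and add new vertices $p,p'$. Replace each $uw$- or $vw$-edge by a $pw$-edge, and each $uv$-edge by a $pp'$-edge. The line graph is unchanged, and the new multigraph has exactly one fewer triangle, contradicting minimality.
\end{enumerate}
That argument, not the one sketched, is what closes the converse.
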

\begin{proof}
    This follows from Corollary 3.2(i) and Theorem 4.2 of~\cite{lgtf}.
\end{proof}
\cref{thm:strong_gem,prop:forbidden2} thus imply:
\begin{corollary}\label{cor:multigraphs}
    If $G$ is the complement of the line graph of a triangle-free multigraph, then $\had_2(G) \geq \chi(G)$.
\end{corollary}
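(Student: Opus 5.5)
The plan is to deduce the corollary from \cref{thm:strong_gem} by checking that the complement of the line graph of a triangle-free multigraph is $\{\text{co-claw}, \text{co-gem}\}$-free. Let $G = \overline{L(H)}$, where $H$ is a triangle-free multigraph. By \cref{prop:forbidden2}, $L(H)$ is $\{\text{claw}, \text{gem}, W_4\}$-free; in particular it is claw-free and gem-free.

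The key observation is that complementation preserves induced subgraphs: for any graph $F$, $\overline{F}$ is an induced subgraph of $G$ if and only if $F$ is an induced subgraph of $\overline{G} = L(H)$. The co-claw is the complement of the claw, and the co-gem is the complement of the gem, as recorded in the introduction. So if $G$ contained an induced co-claw, then $L(H)$ would contain an induced claw, and if $G$ contained an induced co-gem, then $L(H)$ would contain an induced gem. Both contradict \cref{prop:forbidden2}. Hence $G$ is $\{\text{co-claw}, \text{co-gem}\}$-free.

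Applying \cref{thm:strong_gem} to $G$ then gives $\had_2(G) \ge \chi(G)$. The $W_4$-freeness supplied by \cref{prop:forbidden2} is not needed. The argument is routine and has no real obstacle. The only points to check are the direction of \cref{prop:forbidden2} being used (line graph of a triangle-free multigraph implies forbidden subgraphs) and the two complement identities, which hold by definition.
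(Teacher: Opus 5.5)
Your proposal is correct and follows the paper's own route: the paper derives the corollary directly from \cref{thm:strong_gem} and \cref{prop:forbidden2}. The step it leaves implicit is the one you spell out, that claw-free and gem-free line graphs have complements that are co-claw-free and co-gem-free.
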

\textbf{Outline of the paper.} In \cref{sec:gem}, we prove \cref{thm:strong_gem} and hence \cref{cor:multigraphs}.  \cref{thm:strong_fork} relies on \cref{thm:strong_gem} as well as the above-mentioned structure theorem of $\{\text{fork}, \text{antifork}\}$-free graphs (\cref{thm:structure}). In \cref{sec:fork}, we prove \cref{thm:strong_fork} by showing that a minimal $\{\text{fork}, \text{antifork}\}$-free counterexample to \cref{thm:strong_fork} fails all cases of the structure theorem. We conclude with open problems in \cref{sec:op}.

\section{Proof of Theorem~\ref{thm:strong_gem}}
\label{sec:gem}
In this section we prove \cref{thm:strong_gem}. A \textit{hole} is a cycle of length at least 4; an \textit{antihole} is the complement of a hole. A hole or antihole is \textit{odd} if it has an odd number of vertices. A graph $G$ is \textit{perfect} if $\omega(H)=\chi(H)$ for all induced subgraphs $H$ of $G$ (including $G$ itself). The celebrated Strong Perfect Graph Theorem states that the perfect graphs are precisely the $\{\text{odd hole}, \text{odd antihole}\}$-free graphs~\cite{spgt}.

A \textit{minimal counterexample} to \cref{thm:strong_gem} is a $\{\text{co-claw}, \text{co-gem}\}$-free graph $G$ such that $\had_2(G)<\chi(G)$ but $\had_2(H)\ge \chi(H)$ for all proper induced subgraphs $H$ of $G$. We prove \cref{thm:strong_gem} by showing that any minimal counterexample must be odd hole and odd antihole-free, and therefore perfect.

\begin{proof}[Proof of \cref{thm:strong_gem}]
Suppose $G$ is a minimal counterexample to \cref{thm:strong_gem}. First note that $G$ is $C_{2k+1}$-free for $k\ge 3$, since $G$ is $(P_4\sqcup K_1)$-free. We will prove that $G$ is also $\overline{C_{2k+1}}$-free for $k\ge 2$ by induction on $k$. Then $G$ is $\{\text{odd hole}, \text{odd antihole}\}$-free, so $G$ is perfect and $\had_2(G)\ge \omega(G)=\chi(G)$.

For the base case $k=2$, we prove that $G$ is $\overline{C_5}$-free, which is the same as being $C_5$-free. Suppose $X=\{x_1,\dots,x_5\}$ induces $C_5$ in $G$, so $x_i$ is adjacent to $x_j$ if and only if $i-j\equiv \pm 1\pmod 5$. In what follows, we take indices modulo 5. 

\begin{claim}
\label{claim:nghr5}
For each $v\in V(G)\setminus X$, one of the following holds:
\begin{enumerate}[label=(\theclaim\alph*), itemindent=1em]
    \item \label{u1} $N_G(v)\cap X=\{x_i,x_{i+2}\}\text{ for some }i\in\{1,\dots,5\}$, or
    \item \label{u2} $N_G(v)\cap X=\{x_i,x_{i+1},x_{i+3}\}\text{ for some }i\in\{1,\dots,5\}$ or
    \item \label{u3} $N_G(v)\cap X=X$.
\end{enumerate}
\end{claim}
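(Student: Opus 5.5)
We classify $N:=N_G(v)\cap X$ by size, using only that $G$ is co-claw-free and co-gem-free, plus one structural fact about the minimal counterexample $G$: it has no vertex $v$ with $N_G(v)\cap X=\emptyset$ excluded by the forbidden subgraphs, and no vertex that is too weakly attached, excluded by minimality. For the forbidden-subgraph part, observe that $C_5$ contains an induced $P_4$: for instance $x_{i+1}x_{i+2}x_{i+3}x_{i+4}$ is induced, since $x_{i+1}x_{i+4}$ is not an edge. If $v$ has no neighbour among $x_{i+1},\dots,x_{i+4}$, these four vertices together with $v$ induce $P_4\sqcup K_1$, a co-gem. Hence $N$ meets every set of four consecutive vertices of $X$, i.e.\ $X\setminus N$ contains no $4$ of the $5$ vertices. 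So $|N|\ge 2$. Moreover, for $|N|=1$ we would have $N=\{x_i\}$, and $X\setminus\{x_i\}$ gives a co-gem.

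The cases go as follows.

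\emph{Case $|N|=2$.} If $N=\{x_i,x_{i+1}\}$ (adjacent), then the remaining three vertices $x_{i+2},x_{i+3},x_{i+4}$ form an induced $P_3$. I then look for a co-gem using $v$: the path $v\,x_{i+1}\,x_{i+2}\,x_{i+3}$ is an induced $P_4$, because $v\not\sim x_{i+2},x_{i+3}$ and $x_{i+1}\not\sim x_{i+3}$. This $P_4$ must avoid an isolated vertex. The natural candidate is $x_{i+4}$, but $x_{i+4}\sim x_{i+3}$, so that fails. Try instead $x_{i+4}\,x_i\,v\,x_{i+1}$? Here $x_{i+4}\sim x_i$, $x_i\sim v$, $v\sim x_{i+1}$, $x_{i+4}\not\sim v$, $x_{i+4}\not\sim x_{i+1}$, and $x_i\sim x_{i+1}$ creates a triangle $x_i v x_{i+1}$. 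That triangle $\{v,x_i,x_{i+1}\}$ together with $x_{i+3}$, which is nonadjacent to all three, is a co-claw. So adjacent pairs are excluded, leaving $N=\{x_i,x_{i+2}\}$, which is (a).

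\emph{Case $|N|=3$.} If $N$ contains an edge $x_jx_{j+1}$ and some other vertex of $X$ is anticomplete to $\{v,x_j,x_{j+1}\}$, we get a co-claw. The $3$-subsets of $C_5$ are, up to rotation, either three consecutive vertices $\{x_i,x_{i+1},x_{i+2}\}$ or the pattern $\{x_i,x_{i+1},x_{i+3}\}$. For consecutive $N=\{x_i,x_{i+1},x_{i+2}\}$, the triangle $v x_i x_{i+1}$ together with $x_{i+3}$ is a co-claw, since $x_{i+3}$ is adjacent to neither $x_i$ nor $x_{i+1}$, nor to $v$. That leaves (b).

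\emph{Case $|N|=4$.} Write $N=X\setminus\{x_{i}\}$. Then $v x_{i+1} x_{i+2}$ is a triangle, and $x_{i+4}$ is adjacent to $v$, so that choice gives nothing. Instead, check whether $G[\{v\}\cup X]$ contains a co-gem or co-claw. Its complement is $C_5$ plus a vertex adjacent only to $x_i$; this has no induced claw or gem, so neither forbidden subgraph helps. This case must therefore be excluded by minimality. The plan is to show that such a $v$ lets us reduce: for instance, $x_i$ and $v$ are nonadjacent and $N(x_i)\cap X\subseteq N(v)$, suggesting a domination argument. Colour $G-x_i$ by minimality and give $x_i$ the colour of $v$ after establishing $N(x_i)\subseteq N(v)$ globally. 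Failing that, we need to construct a small model.

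I expect this last step to be the main obstacle. It requires going beyond $X\cup\{v\}$, either by showing $N_G(x_i)\subseteq N_G(v)$ via further co-claw or co-gem arguments with outside vertices, or by an explicit small $K_{\chi}$-model built from a $\chi(G)-1$ model of $G-x_i$ plus branch sets $\{x_i,\cdot\}$. Sizes $0$, $1$ and $5$ are immediate from the above.
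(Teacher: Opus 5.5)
Your cases $|N|\le 1$, $|N|=2$ and $|N|=3$ are correct and follow the paper's argument. For $|N|\le 1$ it is a co-gem formed by $v$ and the induced $P_4$ on four vertices of $X$. For the consecutive pair and triple it is a co-claw formed by the triangle $\{v,x_i,x_{i+1}\}$ and $x_{i+3}$.

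The genuine gap is the case $|N|=4$, which you leave unresolved because of a false assertion. With $N=X\setminus\{x_i\}$, the complement $\overline{G}[X\cup\{v\}]$ is a $5$-cycle with a pendant vertex $v$ at $x_i$, and it does contain an induced claw. In $\overline{G}$, the vertex $x_i$ is adjacent to $v$, $x_{i+2}$ and $x_{i+3}$, and these three are pairwise non-adjacent there. Back in $G$, this says $\{v,x_{i+2},x_{i+3}\}$ is a triangle and $x_i$ is adjacent to none of its vertices, so $G[v,x_{i+2},x_{i+3},x_i]\cong K_3\sqcup K_1$.

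You only tested the triangle $\{v,x_{i+1},x_{i+2}\}$. Since $x_i$ is the only vertex of $X$ that $v$ misses, the triangle must avoid both neighbours $x_{i+1},x_{i+4}$ of $x_i$, which forces $\{v,x_{i+2},x_{i+3}\}$. This is exactly how the paper disposes of the case: for $N_G(v)\cap X=\{x_1,\dots,x_4\}$ it uses $G[x_2,x_3,x_5,v]$. So the whole claim follows from co-claw- and co-gem-freeness alone, with no appeal to minimality.

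The domination/minimality route in your last paragraph is therefore unnecessary. As written it is also not an argument: it depends on proving $N_G(x_i)\subseteq N_G(v)$ in all of $G$, which you do not carry out and explicitly flag as an unresolved obstacle. The proposal as submitted does not establish the claim. Replacing the $|N|=4$ paragraph with the one-line co-claw above completes it.
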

If $N_G(v)\cap X=\varnothing$ or $N_G(v)\cap X=\{x_1\}$, then $G[x_2,x_3,x_4,x_5,v]\cong P_4\sqcup K_1$. Thus $|N_G(v) \cap X| \geq 2$. If $|N_G(v) \cap X| = 2$ and $v$ is adjacent to two consecutive vertices of $X$ in $G$, say $N_G(v) \cap X = \{x_1, x_2\}$, then $G[x_1, x_2, v, x_4] \cong K_3 \sqcup K_1$, a contradiction. Thus if $|N_G(v) \cap X| = 2$, then \ref{u1} occurs. If $|N_G(v) \cap X| = 3$ and $v$ is adjacent to three consecutive vertices of $X$ in $G$, say $N_G(v) \cap X = \{x_1, x_2, x_3\}$, then $G[x_1, x_2, v, x_4] \cong K_3 \sqcup K_1$. Thus if $|N_G(v) \cap X| = 3$, then \ref{u2} occurs. 
If $|N_G(v) \cap X| = 4$, then $v$ is adjacent to four consecutive vertices of $X$ in $G$, say $N_G(v) \cap X = \{x_1, x_2, x_3, x_4\}$, then $G[x_2, x_3, x_5, v] \cong K_3 \sqcup K_1$, a contradiction. Thus, $|N_G(v) \cap X| = 5$, and \ref{u3} occurs. This completes the proof of the claim. \triqed\bigskip

Suppose for each $v \in V(G) \setminus X$, \ref{u1} does not occur. 
Let $H\coloneq G\setminus\{x_1,x_2,x_3,x_4\}$. Since $G$ is a minimal counterexample, $H$ has a small $K_{\chi(H)}$-model $M$. Note that every vertex of $H$ (including $x_5$) is adjacent to a vertex of $\{x_1,x_2\}$ and a vertex of $\{x_3,x_4\}$ in $G$, and $x_1 x_2, x_3 x_4, x_2 x_3 \in E(G)$. Therefore, $M'\coloneq M\cup \{\{x_1,x_2\},\{x_3,x_4\}\}$ is a small $K_{\chi(H)+2}$-model in $G$. Note that $\chi(G[x_1,x_2,x_3,x_4]) = 2$, so $\chi(H) \geq \chi(G) - 2$. Thus, $\had_2(G) \geq \had_2(H) + 2 \ge \chi(H) + 2 \geq \chi(G)$. This contradicts the assumption that $G$ is a minimal counterexample.

Therefore, there is a vertex $y \in V(G) \setminus X$ such that $N_G(y) \cap X = \{x_i, x_{i+2}\}$ for some $i \in \{1, \dots, 5\}$. Without loss of generality, suppose $N_G(y) \cap X = \{x_1, x_3\}$.
\begin{claim}
    \label{claim:twins}
    $N_G(x_2)=N_G(y)$.
\end{claim}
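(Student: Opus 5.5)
The plan is to exploit the symmetry created by $y$, and then pin down every other vertex using \cref{claim:nghr5}. First, the statement already holds inside $X\cup\{y\}$. We have $N_G(y)\cap X=\{x_1,x_3\}=N_G(x_2)\cap X\setminus\{x_2\}$, and $y\notin N_G(x_2)$, $x_2\notin N_G(y)$. So it remains to show that every $v\in V(G)\setminus(X\cup\{y\})$ is adjacent to $x_2$ if and only if it is adjacent to $y$.

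The key observation is that $X'\coloneq\{y,x_3,x_4,x_5,x_1\}$ also induces a $C_5$, in the cyclic order $y,x_3,x_4,x_5,x_1$. Its edges are $yx_3,x_3x_4,x_4x_5,x_5x_1,x_1y$, and it has no chords. With respect to $X'$, the vertex $x_2$ is a type-\ref{u1} vertex with neighbours $x_1,x_3$. So $x_2$ and $y$ play perfectly symmetric roles, and it suffices to rule out the case that $v\sim x_2$ but $v\not\sim y$.

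In that case I apply \cref{claim:nghr5} to $v$ twice, once with respect to $X$ and once with respect to $X'$.

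With respect to $X$, the sets containing $x_2$ are the following. From type \ref{u1}: $\{x_2,x_4\}$ and $\{x_2,x_5\}$. From type \ref{u2}: $\{x_1,x_2,x_4\}$, $\{x_2,x_3,x_5\}$ and $\{x_2,x_4,x_5\}$. From type \ref{u3}: all of $X$.

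With respect to $X'$, since $v\not\sim y$, the set $N(v)\cap\{x_1,x_3,x_4,x_5\}$ must be one of $\{x_3,x_5\}$, $\{x_1,x_4\}$, $\{x_1,x_3,x_4\}$ or $\{x_1,x_3,x_5\}$.

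Intersecting the two lists leaves only $N(v)\cap X=\{x_2,x_3,x_5\}$ or $N(v)\cap X=\{x_1,x_2,x_4\}$. These two cases are mirror images under the reflection $x_i\mapsto x_{4-i}$, which fixes $x_2$ and $y$. So only $N(v)\cap X=\{x_1,x_2,x_4\}$ with $v\not\sim y$ needs to be excluded.

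This residual case is the main obstacle. I checked the natural small configurations and found no induced co-claw or co-gem among $X\cup\{y,v\}$. For example, $x_1,v,x_4,x_3,y$ is yet another induced $C_5$, but $x_2$ and $x_5$ both have admissible types with respect to it. So a purely local forbidden-subgraph argument seems unlikely to suffice, and I would use minimality of $G$ instead, in the same way as the argument just before the claim.

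What I would try first is to find a set $S$ of four vertices in $X\cup\{y,v\}$ with $\chi(G[S])=2$ that splits into two adjacent edges $\{a,b\}$ and $\{c,d\}$, such that every vertex outside $S$ has a neighbour in each of $\{a,b\}$ and $\{c,d\}$. Then a small $K_{\chi(G\setminus S)}$-model of $G\setminus S$ extends by these two branch sets, contradicting that $G$ is a counterexample.

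Candidate choices for $S$ come from the several $C_5$'s now available: $X$, $X'$ and $\{x_1,v,x_4,x_3,y\}$. Applying \cref{claim:nghr5} to each of them restricts what an outside vertex can see. The aim is to show that a vertex missing both members of a chosen pair would force an induced $K_3\sqcup K_1$ or $P_4\sqcup K_1$. If no single choice of $S$ works, I would instead look for an outside vertex $w$ whose type relative to these cycles is contradictory, and derive a co-claw from it.
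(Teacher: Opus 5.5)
Your reduction is correct. The proof closes on $X\cup\{y\}$ because $N_G(y)\cap X=N_G(x_2)\cap X$, and $x_2$ and $y$ are interchangeable via the second induced $C_5$ on $\{y,x_3,x_4,x_5,x_1\}$. Intersecting the admissible types from \cref{claim:nghr5} for the two cycles then leaves only $N_G(v)\cap X\in\{\{x_1,x_2,x_4\},\{x_2,x_3,x_5\}\}$ with $vy\notin E(G)$. Your list for $X'$ omits the type-\ref{u1} set $\{x_1,x_3\}$, but no admissible trace coming from $X$ equals it, so the intersection is unaffected.

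The gap is that you stop there. The residual case is left open, and the fallback via minimality is only a plan. You give no argument that a suitable set $S$ exists, or that some outside vertex yields a contradiction. As written, the claim is not proved.

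The missing step is short and local, and your assertion that $X\cup\{y,v\}$ contains no co-claw or co-gem is mistaken. Suppose $N_G(v)\cap X=\{x_1,x_2,x_4\}$ and $vy\notin E(G)$.
\begin{itemize}
\item The vertices $y,x_3,x_2,v$ induce a path in this order, since $yx_2$, $yv$ and $x_3v$ are non-edges.
\item The vertex $x_5$ is adjacent to none of $y,x_3,x_2,v$.
\end{itemize}
Hence $G[y,x_3,x_2,v,x_5]\cong P_4\sqcup K_1$. In the mirror case $N_G(v)\cap X=\{x_2,x_3,x_5\}$ one gets $G[y,x_1,x_2,v,x_4]\cong P_4\sqcup K_1$, which is exactly the co-gem the paper uses.

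The paper reaches this configuration by a slightly different route. It first excludes $vx_4,vx_5\in E(G)$ via the co-claw $G[v,x_4,x_5,y]$. It then uses this co-gem to force $vx_1\in E(G)$, which contradicts \cref{claim:nghr5}. With the one observation above added, your two-list intersection becomes a complete and equally valid proof, and no appeal to minimality is needed.
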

Suppose there is a vertex $w \in N_G(x_2) \setminus N_G(y)$.
Then $w \notin X \cup \{y\}$.
By \cref{claim:nghr5}, $w$ is adjacent to $x_4$ or $x_5$ in $G$. If $wx_4, wx_5 \in E(G)$, then $G[w, x_4, x_5, y] \cong K_3 \sqcup K_1$, a contradiction. Thus not both $x_4$ and $x_5$ are adjacent to $w$ in $G$. Without loss of generality, suppose that $wx_5 \in E(G)$ and $wx_4 \notin E(G)$. If $wx_1 \notin E(G)$, then $G[y, x_1, x_2, w, x_4] \cong P_4 \sqcup K_1$, a contradiction. Thus, $wx_1 \in E(G)$. However, then $\{x_1, x_2, x_5\} \subseteq N_G(w) \cap X$, but $N_G(w) \cap X \neq X$ since $x_4 \notin X$. Thus none of the cases in \cref{claim:nghr5} occurs, a contradiction. Hence, $N_G(x_2) \subseteq N_G(y)$. Swapping the roles of $x_2$ and $y$ (and considering the 5-cycle $\{x_1,y,x_3,x_4,x_5\}$ instead of $X$), we find also that $N_G(y) \subseteq N_G(x_2)$. This completes the proof of the claim.\triqed\bigskip

Let $H\coloneq G\setminus y$. Note that $\chi(H) = \chi(G)$, since a proper $\chi(H)$-coloring of $H$ can be extended to $G$ by coloring $y$ the same color as $x_2$ by \cref{claim:twins}. Since $G$ is a minimal counterexample, $H$ is not a counterexample, and we have $\had_2(G)\ge\had_2(H)\ge\chi(H)=\chi(G)$, a contradiction. Thus $G$ is $C_5$-free, and this completes the base case.

\bigskip

Now let us prove the inductive step. Suppose we have proven that $G$ is $\{K_3\sqcup K_1,P_4\sqcup K_1,\overline{C_5},\dots,\overline{C_{2k-1}}\}$-free for some $k\ge 3$; we wish to show that $G$ is also $\overline{C_{2k+1}}$-free. Suppose that $X=\{x_1,\dots,x_{2k+1}\}$ induces $\overline{C_{2k+1}}$ in $G$, i.e.\ $x_i$ is adjacent to $x_j$ if and only if $i-j\not\equiv \pm 1\pmod{2k+1}$. In what follows, we take indices modulo $2k+1$. 

\begin{claim}
    \label{claim:nghr_general}
For each $v\in V(G)\setminus X$, one of the following holds:
\begin{enumerate}[label=(\theclaim\alph*), itemindent=1em]
    \item \label{v1} $N_G(v)\cap X=X$, or
    \item \label{v2} $N_G(v)\cap X=X\setminus\{x_i,x_{i+1}\}$ for some $i\in\{1,\dots,2k+1\}$, or
    \item \label{v3} $N_G(v)\cap X=X\setminus\{x_i,x_{i+1},x_{i+2}\}$ for some $i\in\{1,\dots,2k+1\}$.
\end{enumerate}
\end{claim}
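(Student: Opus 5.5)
We pass to the complement and reduce the claim to a statement about how a vertex can attach to an odd hole. Write $\overline{G}$ for the complement. Because $G$ is $\{K_3\sqcup K_1, P_4\sqcup K_1\}$-free, $\overline{G}$ is $\{\text{claw},\text{gem}\}$-free. By the induction hypothesis $G$ is $\{\overline{C_5},\dots,\overline{C_{2k-1}}\}$-free, so $\overline{G}$ has no induced odd hole of length $5,\dots,2k-1$. In $\overline{G}$ the set $X$ induces the hole $x_1x_2\cdots x_{2k+1}x_1$.

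For $v\notin X$, let $S=X\setminus N_G(v)$, which is the set of $\overline{G}$-neighbours of $v$ on this hole. The three outcomes \ref{v1}--\ref{v3} say exactly that $S$ is empty, or $S$ is a set of $2$ or $3$ consecutive vertices of the hole. So assume $S\neq\varnothing$ and split $S$ into \emph{runs}, meaning maximal sets of consecutive hole vertices. Distinct runs are separated by nonempty gaps, so vertices from different runs are nonadjacent in $\overline{G}$. We first restrict the runs, then the number of runs.

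\textbf{Restricting each run.} First, no run is a single vertex $x_i$. Otherwise $x_i$ together with $x_{i-1},x_{i+1},v$ induces a claw in $\overline{G}$, since $x_{i\pm1}\notin S$ and $x_{i-1}x_{i+1}\notin E(\overline{G})$ because $2k+1\ge 7$. Second, no run has $4$ or more vertices, since four consecutive vertices of $S$ together with $v$ induce a gem. In particular $S\neq X$.

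\textbf{Restricting the number of runs.} If there are at least three runs, then picking one vertex from each gives three pairwise nonadjacent neighbours of $v$, a claw in $\overline{G}$. Suppose there are exactly two runs $A$ and $B$. If one of them, say $B$, has $3$ vertices, then its two ends are nonadjacent in $\overline{G}$. Together with any vertex of $A$ they form a claw centred at $v$. Hence $|A|=|B|=2$, and the two gaps have sizes $g_1,g_2\ge1$ with $g_1+g_2=2k+1-4=2k-3$, which is odd. So one gap, say $g_1$, is even, hence $g_1\ge 2$, and $g_1\le 2k-4$.

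Let $a$ and $b$ be the ends of $A$ and $B$ bordering this gap. Then $v$, $a$, the $g_1$ gap vertices and $b$ form a cycle of length $g_1+3$. This cycle is induced: $v$ has no neighbours in the gap, and any chord between hole vertices would have to be a hole edge. Its length $g_1+3$ is odd and lies between $5$ and $2k-1$. That gives an odd antihole $\overline{C_{g_1+3}}$ in $G$, contradicting the induction hypothesis.

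So $S$ is a single run of size $2$ or $3$, or empty, which proves the claim. The only delicate step is the two-run case: one has to combine the claw argument, which forces both runs to have size $2$, with the parity count, which produces a short odd hole. That short odd hole is exactly where the inductive hypothesis on smaller antiholes is used.
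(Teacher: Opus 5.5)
Your proof is correct and follows essentially the paper's own argument. The paper rules out isolated vertices of $S$ and four consecutive vertices via co-claw/co-gem (your claw and gem in $\overline{G}$), excludes a triple plus another vertex, or three pairs, via a $K_3\sqcup K_1$ (your claw centred at $v$), and handles exactly two pairs by the same parity count that yields a shorter odd hole in $\overline{G}$. Your organization into runs is just a cleaner bookkeeping of that same case analysis.
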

Let $v \in V(G) \setminus X$ and consider $\overline{G}$. If $x_i \in N_{\overline{G}}(v)$ and $x_{i-1}, x_{i+1} \notin N_{\overline{G}}(v)$ for some $i \in \{1, \dots, 2k+1\}$, then $G[v,x_{i-1},x_i,x_{i+1}] \cong K_3 \sqcup K_1$, a contradiction.
Thus $vx_{i-1}\in E(\overline{G})$ or $vx_{i+1}\in E(\overline{G})$. Without loss of generality, suppose $vx_{i+1}\in E(\overline{G})$. If $x_{i-1},x_{i+2} \in N_{\overline{G}}(v)$, then $G[x_{i-1},x_i,x_{i+1},x_{i+2},v] \cong P_4\sqcup K_1$, a contradiction. Thus $v$ is not adjacent in $\overline{G}$ to four or more consecutive vertices of $X$.

Therefore, $N_{\overline{G}}(v) \cap X$ can be partitioned into pairs and triplets of consecutive vertices of $X$. If $N_{\overline{G}}(v) \cap X = \varnothing$, then $N_G(v)\cap X=X$, and \ref{v1} occurs.
Suppose $v$ is adjacent in $\overline{G}$ to a triplet $\{x_i,x_{i+1},x_{i+2}\}$ and at least one other vertex of $X$, say $x_j$, with $j \notin \{i-1, i, i+1, i+2, i+3\}$. Then $G[x_i,x_{i+2},x_j,v]\cong K_3\sqcup K_1$, a contradiction. Hence, if $v$ is adjacent in $\overline{G}$ to a triplet of consecutive vertices of $X$, it is not adjacent to any other vertex of $X$ (in $\overline{G}$), and therefore such a $v$ satisfies \ref{v3}.

Thus, we may assume that $v$ is not adjacent to a triplet of consecutive vertices of $X$ in $\overline{G}$. If $v$ is adjacent in $\overline{G}$ to at least three distinct pairs $\{x_a,x_{a+1}\}$, $\{x_b,x_{b+1}\}$, $\{x_c,x_{c+1}\}$, then $G[x_a,x_b,x_c,v]\cong K_3\sqcup K_1$, a contradiction. If $v$ is adjacent in $\overline{G}$ to exactly two pairs $\{x_i,x_{i+1}\}$ and $\{x_j,x_{j+1}\}$, then consider the two sets
\begin{align*}
    Y_1&\coloneq\{x_{i+1},x_{i+2},\dots,x_{j-1},x_j,v\}, \\
    Y_2&\coloneq\{x_{j+1},x_{j+2},\dots,x_{i-1},x_i,v\}.
\end{align*}
Note that $\overline{G}[Y_1]$ and $\overline{G}[Y_2]$ are cycle graphs of orders between 4 and $2k-1$, inclusive. The sum of the lengths of these two cycles is precisely $2k+3$, which is odd, so one of these two cycles is an odd hole in $\overline{G}$ of length between 5 and $2k-1$, inclusive, and therefore an odd antihole in $G$ of the same length. However, this contradicts the inductive hypothesis. Thus if $v$ is adjacent in $\overline{G}$ to a pair of consecutive vertices of $X$, $v$ cannot be adjacent to a second pair, and so such a $v$ satisfies \ref{v2}. This completes the proof of the claim. \triqed\bigskip

Now consider $H\coloneq G\setminus\{x_1,\dots,x_6\}$. Since $G$ is a minimal counterexample, $H$ has a small $K_{\chi(H)}$-model $M$. Then, by \cref{claim:nghr_general}, every vertex of $H$ is adjacent to a vertex in each of the sets $\{x_1,x_4\}, \{x_2, x_5\}$ and $\{x_3,x_6\}$ in $G$. Therefore
\begin{equation*}
    M\cup\{\{x_1,x_4\},\{x_2,x_5\},\{x_3,x_6\}\}
\end{equation*}
is a small $K_{\chi(H)+3}$-model in $G$. Since $\chi(G[\{x_1,\dots,x_6\}])=3$, we have $\chi(G)\le \chi(H)+3$, and therefore $\had_2(G)\ge \chi(G)$, a contradiction.  This completes the inductive step and the proof.
\end{proof}

\section{Proof of Theorem~\ref{thm:strong_fork}}
\label{sec:fork}
In this section we prove \cref{thm:strong_fork}. We need the following structure theorem of $\{\text{fork}, \text{antifork}\}$-free graphs by Chudnovsky, Cook, and Seymour (definitions to follow):

\begin{theorem}[\cite{ccs}]\label{thm:structure}
Let $G$ be a $\{\text{fork}, \text{antifork}\}$-free graph. Then at least one of the following is true: 
\begin{enumerate}[label=(\roman*),leftmargin=5em]
    \item[(i), (ii)] $G$ or $\overline{G}$ is disconnected.
    \item[(iii), (iv)] $G$ or $\overline{G}$ has a pair of adjacent simplicial twins.
    \item[(v), (vi)] $G$ or $\overline{G}$ is candled.
    \item[(vii), (viii)] $G$ or $\overline{G}$ is the line graph of a simple triangle-free graph.
\end{enumerate}
\end{theorem}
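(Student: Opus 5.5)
Since \cref{thm:structure} is a published structure theorem of Chudnovsky, Cook, and Seymour~\cite{ccs}, in the paper we take it as given and cite it. If I had to reconstruct a proof, my first move would be to use the symmetry of the class: the antifork is the complement of the fork, so $G$ is $\{\text{fork},\text{antifork}\}$-free exactly when $\overline{G}$ is. The eight outcomes come in four complementary pairs, so it suffices to find a list of structures $\mathcal{S}$ such that every such graph $G$ satisfies an outcome of $\mathcal{S}$ for $G$ or for $\overline{G}$. I would therefore assume that $G$ and $\overline{G}$ are both connected and that neither has a pair of adjacent simplicial twins. The goal is then to show that $G$ or $\overline{G}$ is candled or is the line graph of a simple triangle-free graph.

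The main case split is on induced claws. Suppose first that neither $G$ nor $\overline{G}$ contains an induced claw, so $G$ is $\{\text{claw},\text{co-claw}\}$-free. Here I would aim to show directly that $G$ or $\overline{G}$ is a line graph of a triangle-free graph, by verifying the remaining forbidden subgraphs from \cref{prop:forbidden2}. Concretely, I would show that one of $G,\overline{G}$ is gem-free and $W_4$-free, and that it is simple in the appropriate sense. The twin-freeness assumption is what should exclude the parallel edges that a multigraph would allow.

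Otherwise, after possibly complementing, $G$ contains an induced claw with centre $c$ and leaves $a_1,a_2,a_3$. Fork-freeness says that every vertex adjacent to exactly one leaf must also be adjacent to $c$. Antifork-freeness constrains the vertices nonadjacent to $c$. Using these two facts, I would classify every other vertex by its adjacency to the claw, much as \cref{claim:nghr5} does for $C_5$. I would then grow a maximal structure around the claw and argue that this structure is exactly a candle configuration. Whenever a vertex refuses to fit the structure, I would aim to show that it produces a fork or antifork, disconnects $G$ or $\overline{G}$, or creates simplicial twins.

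I expect this last step to be the main obstacle. Since the definition of \emph{candled} is only given after the statement, the hard part is proving that the local neighbourhood classification around a claw globalises into that exact structure. Handling this requires long case analysis, which is why we defer to~\cite{ccs} rather than reproving it.
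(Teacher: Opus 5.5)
Your proposal matches the paper: the paper gives no proof of this structure theorem and simply imports it from Chudnovsky, Cook, and Seymour~\cite{ccs}, exactly as you do. Your reconstruction outline is only heuristic, and is not a proof, since the decisive step of turning the local classification around a claw into a candled structure is left open. Nothing in the paper requires reproving the theorem, so citing~\cite{ccs} is all that is needed.
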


Chudnovsky, Cook, and Seymour used \cref{thm:structure} to prove that $\chi(G)\le 2\omega(G)$ if $G$ is $\{\text{fork}, \text{antifork}\}$-free. We now explain the unfamiliar terms appearing in this structure theorem.

A vertex $v$ in a graph $G$ is \textit{simplicial} if its neighborhood $N_G(v)$ induces a clique. Two vertices $u,v \in V(G)$ are \textit{twins} if $N_G(u)\cap (V(G)\setminus \{u,v\}) = N_G(v)\cap (V(G)\setminus \{u,v\})$.

Two sets of vertices are \textit{complete} if every edge between them exists, and \textit{anticomplete} if there are no edges between them. A \textit{candelabrum} is a graph formed by $2k$ non-empty disjoint sets of vertices $Y_1,\dots,Y_k$ and $Z_1,\dots,Z_k$ such that:
\begin{itemize}
    \item Each $Y_i$ is a clique and each $Z_i$ is a stable set.
    \item The sets $Y_i$ are pairwise anticomplete and the sets $Z_i$ are pairwise complete.
    \item $Y_i$ is complete to $Z_j$ if $i=j$ and anticomplete to $Z_j$ if $i\ne j$.
\end{itemize}
We call the sets $Y_i$ the \textit{candle sets} and $Z_i$ the \textit{base sets}.

A graph $G$ is \textit{candled} if it contains a non-empty candelabrum with candle sets $Y_1,\dots,Y_k$ and base sets $Z_1,\dots,Z_k$ such that every vertex of $G$ not in the candelabrum is adjacent to every vertex in $Z_1,\dots,Z_k$ and non-adjacent to every vertex in $Y_1,\dots,Y_k$ (see \cref{fig:candled}). 
\begin{figure}[!htb]
    \centering
    \includegraphics[scale=0.9]{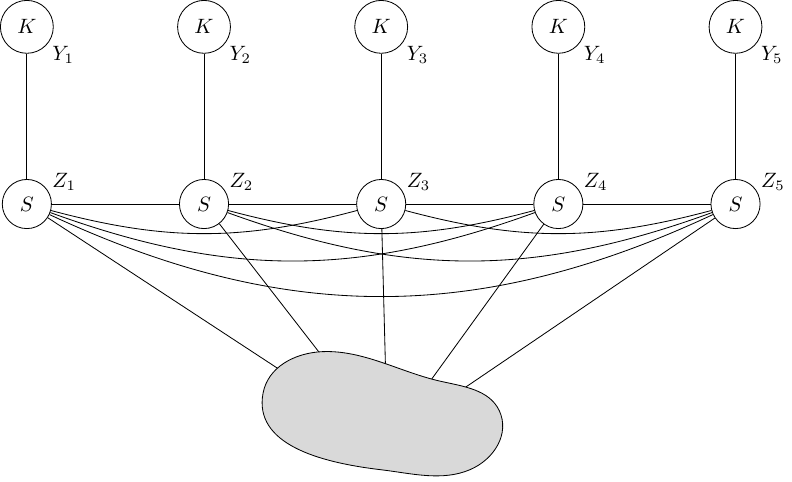}
    \caption{A candled graph with candelabrum $Y_1 \cup \dots \cup Y_5 \cup Z_1 \cup \dots \cup Z_5$. Circles and the grey blob represent sets of vertices. Lines between two sets indicate complete pairs, the absence of a line indicates anticomplete pairs, and $K$ and $S$ mark cliques and stable sets, respectively. The grey blob represents all of the vertices that are not in the candelabrum. This figure is modified from~\cite{ccs}.} 
    \label{fig:candled}
\end{figure}

We now show that a minimal $\{\text{fork}, \text{antifork}\}$-free counterexample to \cref{thm:strong_fork} cannot satisfy any of the eight outcomes in \cref{thm:structure}, thus proving \cref{thm:strong_fork}.

\begin{proposition}\label{prop:easy}
Let $G$ be a minimal counterexample to \cref{thm:strong_fork}. Then:
\begin{enumerate}[label=(\roman*)]
    \item~\label{o1} $G$ is not disconnected,
    \item~\label{o2} $\overline{G}$ is not disconnected,
    \item~\label{o3} $G$ has no simplicial vertex,
    \item~\label{o4} $G$ has no pair of non-adjacent twins,
    \item~\label{o5} $G$ is not candled,
    \item~\label{o6} $\overline{G}$ is not candled,
    \item~\label{o7} $G$ is not a line graph of a simple graph, and
    \item~\label{o8} $\overline{G}$ is not the line graph of a triangle-free (multi)graph.
\end{enumerate}
\end{proposition}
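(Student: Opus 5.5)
We handle the eight items separately. Throughout, $G$ is a minimal counterexample: $\had_2^+(G)<\chi(G)$, while every proper induced subgraph $H$ satisfies $\had_2^+(H)\ge\chi(H)$. Since the class is hereditary, every proper induced subgraph is again fork- and antifork-free, so minimality applies to it.

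\emph{Items (i)--(iv).}
\begin{itemize}
\item For (i), $\chi(G)$ equals the maximum over components of their chromatic numbers. A model in that component is a model in $G$.
\item For (ii), write $V(G)=A\cup B$ with $A$ complete to $B$. Then $\chi(G)=\chi(G[A])+\chi(G[B])$. Take semi-small models $M_A,M_B$ in the two sides. Their union is a $K_{\chi(G)}$-model, because every branch set in $A$ is adjacent to every branch set in $B$. However, it may contain two large branch sets. To fix this, choose the models so that at most one is not small. If both models have a large set $S_A$ and $S_B$, we need a repair step. The repair I would try first is to apply induction more cleverly. Any branch set of size at least $3$ can be replaced by the union of all large sets, since $S_A\cup S_B$ is connected: it contains edges across $A$ and $B$. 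This merging loses one clique vertex. So I instead take $M_A$ with a large set and the small part of the $B$-side, and handle the deficit using that any vertex $a\in A$ together with any $b\in B$ forms a size-2 connected set. Concretely, I would pair leftover singleton-free vertices across the cut. I expect this to need a careful count.
\item For (iii), if $v$ is simplicial, then $\chi(G)\le\max(\chi(G-v),\deg v+1)$. Moreover $\deg v+1\le\omega(G)\le\had_2^+(G)$, so removing $v$ gives a contradiction.
\item For (iv), non-adjacent twins $u,v$ can share a colour. So $\chi(G-u)=\chi(G)$, and a model in $G-u$ finishes.
\end{itemize}

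\emph{Items (v)--(viii).} These are the substantive cases.
\begin{itemize}
\item For (v), let the remainder be $R=V(G)\setminus\bigcup(Y_i\cup Z_i)$. Colour as follows: colour $G[R]$ with $\chi(G[R])$ colours, use one fresh colour per $Z_i$ (possibly fewer), and colour each $Y_i$ reusing colours. The bound I would aim for is $\chi(G)\le\chi(G[R])+k'$, where I would try $k'=\max_i |Y_i|$-type quantities. For the model, combine a model in $G[R]$ (obtained by minimality) with singleton branch sets from the $Z_i$, since the $Z_i$ are complete to each other and to $R$. I would then add the largest $Y_i$ as singletons alongside one vertex of $Z_i$. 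A cleaner route is to delete a vertex $y\in Y_i$ with $|Y_i|$ not maximal, or a vertex of $Z_i$ when $|Z_i|\ge2$ (non-adjacent twins, by (iv)). This reduces the problem to the case where each $Z_i$ is a singleton and at most one $Y_i$ has more than one vertex. That structure then has an explicit colouring and model.
\item For (vi), in $\overline G$ the roles flip. In $G$, the $Y_i$ become stable sets that are pairwise complete, the $Z_i$ become cliques, and $R$ is complete to the $Y$'s and anticomplete to the $Z$'s. I would again reduce by twins and then argue directly.
\item For (vii), I would invoke the known results that Hadwiger's Conjecture holds for line graphs, in the small-model form of Reed--Seymour. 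Failing that, I would note that line graphs of simple graphs satisfy $\chi\le\omega+1$ by Vizing, and use $\omega+1$ via a single pair.
\item For (viii), \cref{cor:multigraphs} gives $\had_2(G)\ge\chi(G)$ directly.
\end{itemize}

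The main obstacle I expect is item (ii). Combining two semi-small models may produce two large branch sets, so a reduction is needed there. The candled cases also require care in matching the colouring bound to an explicit model.
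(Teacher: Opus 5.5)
There are genuine gaps. Items (i), (iii), (iv) and (viii) are fine. Item (vii) contains a step that fails, and (ii), (v), (vi) are left unfinished.

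\textbf{Item (vii).} Reed--Seymour gives only $\had(G)\ge\chi(G)$, with no control on branch-set sizes. There is no small-model form of it, and there cannot be one. For the same reason, your fallback of reaching $\omega+1$ ``via a single pair'' fails. \cref{prop:largegirth} exhibits line graphs with $\chi=\omega+1$ but $\had_2=\chi-1$; already $\had_2(C_7)=2<3=\chi(C_7)$. A branch set of size greater than $2$ is unavoidable here, which is exactly why the theorem uses $\had_2^+$. The missing argument runs as follows.
\begin{enumerate}[label=(\arabic*)]
\item By Vizing, assume $\chi(G)=\omega(G)+1$ and fix a maximum clique $C$.
\item Suppose $G\setminus C$ has components $A_1,\dots,A_k$ with $k\ge 2$. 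Each $G[C\cup A_i]$ is a proper induced subgraph, so by minimality it is $(\chi(G)-1)$-colourable. Permute colours so these colourings agree on the clique $C$, and glue them. This colours $G$ with $\chi(G)-1$ colours, a contradiction. So $G\setminus C$ is connected and nonempty.
\item Every $c\in C$ has a neighbour in $G\setminus C$; otherwise $c$ is simplicial, contradicting (iii).
\item Hence the singletons $\{c\}$, $c\in C$, together with the one large set $V(G)\setminus C$, form a semi-small $K_{\chi(G)}$-model.
\end{enumerate}

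\textbf{Item (ii).} You correctly see that $M_A\cup M_B$ may contain two large branch sets. You never resolve this, and you cannot simply choose the models to avoid it. The fix uses exactly the ingredient you mention. Suppose $S_1\subseteq A$ and $T_1\subseteq B$ are both large. Pick distinct $s_1,s_2\in S_1$ and $t_1,t_2\in T_1$, and replace $S_1,T_1$ by $\{s_1,t_1\}$ and $\{s_2,t_2\}$. Each pair is connected. Each pair sees every other $A$-branch set through its $B$-vertex and every other $B$-branch set through its $A$-vertex. The two pairs are adjacent via $s_1t_2$. So no clique vertex is lost, and the model even becomes small.

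\textbf{Items (v) and (vi).} The twin reduction is right. However, the step ``delete $y\in Y_i$ with $|Y_i|$ not maximal'' is unjustified: in a minimal counterexample you may discard a vertex only after showing $\chi$ is unchanged. The promised explicit colouring and model also never appear. None is needed.
\begin{itemize}
\item In (v), once each $Z_i=\{z_i\}$, every $y\in Y_i$ has $N_G(y)=(Y_i\setminus\{y\})\cup\{z_i\}$. This is a clique, so $y$ is simplicial, contradicting (iii).
\item In (vi), two vertices of one $Y_i$ are non-adjacent twins in $G$, so each $Y_i=\{y_i\}$. Then every $z\in Z_i$ has $N_G(z)=(Z_i\setminus\{z\})\cup\{y_j:j\ne i\}$. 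In $G$ the $Z_i$ are cliques, the $y_j$ are pairwise adjacent, and $y_j$ is complete to $Z_i$ for $j\ne i$. So this neighbourhood is a clique, and (iii) again gives the contradiction.
\end{itemize}
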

\begin{proof}
Note that $\chi(G)>\chi(H)$ for all proper induced subgraphs $H$ of $G$, as otherwise $\had_2^+(G)\ge\had_2^+(H)\ge\chi(H)=\chi(G)$.
\begin{enumerate}[label=(\roman*)]
    \item If $G=A\sqcup B$ for some non-empty graphs $A$ and $B$, then
    \[ \had_2^+(G)=\max(\had_2^+(A), \had_2^+(B))\ge \max(\chi(A), \chi(B))=\chi(G). \]
    \item Suppose $\overline{G}=\overline{A}\sqcup \overline{B}$ for some non-empty graphs $A$ and $B$. Then $\chi(G)=\chi(A) + \chi(B)$. Let $M=\{S_1,\dots,S_{\chi(A)}\}$ be a semi-small $K_{\chi(A)}$-model in $A$ with $\abs{S_i}\le 2$ for $i\ge 2$ and $N=\{T_1,\dots,T_{\chi(B)}\}$ be a semi-small $K_{\chi(B)}$-model in $B$ with $\abs{T_i}\le 2$ for $i\ge 2$. If $\abs{S_1}\le 1$ or $\abs{T_1}\le 1$ then $M\cup N$ is a semi-small $K_{\chi(G)}$-model. Otherwise, let $s_1,s_2\in S_1$ and $t_1,t_2\in T_1$ be distinct; then
    \[ \{S_2,\dots,S_{\chi(A)},T_2,\dots,T_{\chi(B)},\{s_1,t_1\},\{s_2,t_2\}\} \]
    is a small $K_{\chi(G)}$-model.
    \item Let $v \in V(G)$ be simplicial. Since $G$ is a minimal counterexample, $\chi(G\setminus v)=\chi(G)-1$, so $|N_G(v)| \geq \chi(G) - 1$. Since $v$ is simplicial, $N_G(v) \cup \{v\}$ induces a clique of size $\chi(G)$ in $G$, implying $\had_2^+(G)\ge\omega(G)=\chi(G)$.
    \item If $u,v \in V(G)$ are non-adjacent twins, then $\chi(G\setminus u)=\chi(G)$, since $u$ can be colored the color assigned to $v$ in a coloring of $G \setminus u$. Thus $G$ is not a minimal counterexample.
    \item Suppose $G$ is candled with base sets $Z_1,\dots,Z_k$ and candle sets $Y_1,\dots,Y_k$. If $|Z_i| \geq 2$ for some $i \in \{1, \dots, k\}$, then for distinct $u,v \in Z_i$, $u$ and $v$ are non-adjacent twins in $G$, contradicting~\ref{o4}. Thus $|Z_i| = 1$ for each $i \in \{1, \dots, k\}$. Therefore every vertex in each $Y_i$ is simplicial in $G$, contradicting~\ref{o3}.
    \item Suppose $\overline{G}$ is candled with base sets $Z_1,\dots,Z_k$ and candle sets $Y_1,\dots,Y_k$. If $|Y_i| \geq 2$ for some $i \in \{1, \dots, k\}$, then for distinct $u,v \in Y_i$, $u$ and $v$ are non-adjacent twins in $G$, contradicting~\ref{o4}.
    Thus, $|Y_i| = 1$ for each $i \in \{1, \dots, k\}$. Therefore every vertex in each $Z_i$ is simplicial in $G$ (the neighborhood of such a vertex $z$ in $G$ consists of $Z_i\setminus \{z\}$ plus the unique vertex in each $Y_j$ with $j\ne i$; these vertices are all pairwise adjacent in $G$), contradicting~\ref{o3}.
    \item Recall Vizing's theorem that $\chi(G)\le \omega(G)+1$ if $G$ is a line graph~\cite{vizing}. If $\omega(G) = \chi(G)$, then we are done, so assume $\omega(G) = \chi(G) - 1$. Let $C$ be a clique of $G$ of size $\omega(G)$, and let $A_1, \dots, A_k$ be non-empty connected components of $G \setminus C$. We will show that $k = 1$. If $k \geq 2$, then since $G$ is a minimal counterexample, $\chi(G[C \cup A_i]) < \chi(G)$ for each $i \in \{1, \dots, k\}$. By relabeling colors, we can combine the colorings of $G[C \cup A_1], \dots, G[C \cup A_k]$ to color $G$ using $\max(\chi(G[C\cup A_1]),\dots,\chi(G[C\cup A_k]))$ colors. Then $\chi(G) = \max(\chi(G[C\cup A_1]),\dots,\chi(G[C\cup A_k]))< \chi(G)$, a contradiction.
     
    Thus $k = 1$. If some vertex $c\in C$ was not adjacent to any vertex in $A_1$, then $c$ is simplicial, contradicting~\ref{o3}. Thus $\{\{c\}\}_{c\in C}\cup\{A_1\}$ is a semi-small $K_{\chi(G)}$-model. 
    \item This follows from \cref{cor:multigraphs}. \qedhere
\end{enumerate}
\end{proof}
Note that~\ref{o7} is the only case where a branch set of size larger than 2 is introduced.
One might wonder if it is possible to replace $\had_2^+(G)$ in \cref{thm:strong_fork} with $\had_2(G)$. This cannot be done, because the cycles $C_{2k+1}$ with $k\ge 3$ are line graphs of triangle-free graphs and have $2=\had_2(G)<\chi(G) = 3$, so branch sets of size larger than $2$ are indeed required in~\ref{o7}. In fact, there is a more general counterexample:
\begin{proposition}\label{prop:largegirth}
Let $H$ be a graph with girth at least 7, let $G$ be the line graph of $H$, and assume $\chi(G)=\omega(G)+1$. Then $\had_2(G)=\chi(G)-1$.
\end{proposition}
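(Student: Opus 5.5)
Two inequalities are needed. The lower bound $\had_2(G)\ge\omega(G)=\chi(G)-1$ is immediate from $\had_2(G)\ge\omega(G)$. So the real content is the upper bound: $G=L(H)$ has no small $K_t$-model with $t=\omega(G)+1$. Write $\Delta=\Delta(H)$. I will assume $\omega(G)=\Delta$; this holds since $H$ is triangle-free, so every clique of $L(H)$ is a star. Suppose for contradiction that $M$ is a small $K_{\Delta+1}$-model in $G$.

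Each branch set is either a single edge of $H$ or a pair of adjacent edges, which is a path of length 2 in $H$. I will call the latter a \emph{cherry}. For a branch set $S$, let $V(S)$ be the set of vertices of $H$ covered by the edges of $S$. Two branch sets touch in $G$ exactly when $V(S)\cap V(S')\neq\varnothing$. So we have $\Delta+1$ subgraphs of $H$, pairwise edge-disjoint, each with at most 3 vertices, and pairwise intersecting in vertices.

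The key step is to exploit girth $\ge 7$ to force a common vertex. Consider the union $U$ of all the branch sets. I claim that the intersection pattern, together with large girth, forces every branch set to contain one fixed vertex $v$. The idea is as follows. Pick two branch sets that are disjoint from some third, or otherwise argue Helly-type. Three pairwise-intersecting paths of length $\le 2$ in a graph of girth $\ge 7$ must share a common vertex. If they did not, then taking pairwise intersection points $p_{12},p_{13},p_{23}$ and the subpaths between them inside each set (each of length $\le 2$) would give a closed walk of length $\le 6$ that is not tree-like, and hence a cycle of length $\le 6$. I expect this Helly-type step to be the main obstacle: one must check carefully that the closed walk really contains a cycle, using edge-disjointness of the branch sets and the fact that each is a path. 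Since subtrees of a tree have the Helly property and the union locally behaves like a tree, I would then extend from triples to the whole family.

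Alternatively, I would avoid general Helly and argue directly. The union $U$ of the branch-set subgraphs has small diameter, namely $\le 4$, because any two sets intersect and each has diameter $\le 2$. A connected subgraph of $H$ of diameter $\le 4$ with no cycle of length $\le 6$ is close to a tree, but it might still contain a cycle of length up to 9. So the triple argument is cleaner, and that is the route I will take. Given a common vertex $v$, each branch set contains an edge incident to $v$ if $v$ is its centre or an endpoint. It remains to handle a set where $v$ is a cherry endpoint, say the cherry $v a b$; it still uses the edge $va$ at $v$. Since the sets are edge-disjoint, they use $\Delta+1$ distinct edges at $v$, which contradicts $\deg(v)\le\Delta$.
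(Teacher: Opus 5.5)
Your route is the paper's: girth at least $7$ forces any three pairwise-touching branch sets to share a vertex, so all branch sets pass through one vertex $v$ of $H$, and edge-disjointness then gives $\Delta(H)+1$ distinct edges at $v$. The triple step you flagged as the main obstacle is routine. With no common vertex the points $p_{12},p_{13},p_{23}$ are distinct, and the three subpaths lie in different branch sets, so they are pairwise edge-disjoint. Their concatenation is therefore a closed trail of length at most $6$, and its edge set contains a cycle.

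The step you have not actually justified is the passage from triples to the whole family. ``The union locally behaves like a tree'' is not an argument; you yourself note the union may contain cycles. Triple-wise common points also do not give a common point for general set systems: take the four $3$-subsets of a $4$-set. The fix here is one line. Two distinct branch sets cannot share two vertices $x,y$, because their $x$--$y$ subpaths are edge-disjoint of length at most $2$ and would close a cycle of length at most $4$. So $V(S_1)\cap V(S_2)=\{v\}$, and the triple property applied to $S_1,S_2,S$ forces $v\in V(S)$ for every other branch set $S$. The paper also passes over this step without comment. With this addition your proof is complete and somewhat more detailed than the paper's.
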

\begin{proof}
Note that connected subsets of $V(G)$ of size 1 or 2 correspond to the edge sets of 1- or 2-edge paths in $H$ respectively. Since $H$ has girth at least 7, three disjoint connected subsets of $V(G)$ of size at most 2 can only be pairwise adjacent if the corresponding edge sets in $H$ meet at a common vertex. Therefore $\had_2(G)\le\Delta(H)=\omega(G)$, where $\Delta(H)$ is the maximum degree in $H$, and so $\had_2(G)=\chi(G)-1$.
\end{proof}

An example of such $H$ is a $d$-regular graph of girth at least 7 with $\abs{V(H)}$ odd, which guarantees $\chi(G) = \omega(G) + 1$ (to see this, note that each color class of a coloring of $G$ corresponds to a matching in $H$ and therefore covers at most $(\abs{V(H)}-1)/2$ edges of $H$; there are $d\abs{V(H)}/2$ edges in $H$, so at least $d+1=\omega(G)+1$ color classes are required).

\section{Further results and open problems}
\label{sec:op}

One can easily improve the $\{\text{co-claw}, \text{co-gem}\}$-free result to a slightly more general class. Note that if $\Cc$ is a hereditary class such that $\had_2(G)\ge \chi(G)$ for all $G\in \Cc$, then the closure of $\Cc$ under disjoint unions is another class with the same property. The $\{\text{co-claw}, \text{co-gem}\}$-free graphs are not closed under disjoint unions. Let $\Cc'$ denote the closure of $\{\text{co-claw}, \text{co-gem}\}$-free graphs under disjoint unions, which corresponds to graphs whose connected components are $\{\text{co-claw}, \text{co-gem}\}$-free. We may obtain a forbidden induced subgraph characterization of $\Cc'$ as follows. Note that the co-claw with the isolated vertex removed is $K_3$, and the co-gem with the isolated vertex removed is the $4$-vertex path $P_4$. Let $G$ be a connected graph that contains the co-claw or co-gem as an induced subgraph, say on vertex set $Y$. Define $X \subseteq Y$ as follows: if $Y$ induces the co-claw, then $X$ induces $K_3$, and if $Y$ induces the co-gem, then $X$ induces $P_4$. Then there is at least one vertex $v \in V(G)$ anticomplete to $X$ (for instance, the unique vertex in $Y\setminus X$). Choose such a $v$ as close to $X$ as possible; then $v$ must be distance $2$ from $X$. Let $w$ be a neighbor of $v$ that is adjacent to a vertex of $X$. By considering the neighborhood of $w$ in $X$, we deduce that $G$ induces one of the graphs in \cref{fig:graphs3}. Hence:

\begin{proposition}\label{prop:connected}
Every connected component of $G$ is $\{\text{co-claw}, \text{co-gem}\}$-free if and only if $G$ is $S$-free, where $S$ is the set of graphs depicted in \cref{fig:graphs3}. Therefore if $G$ is $S$-free, then $\had_2(G) \geq \chi(G)$.
\end{proposition}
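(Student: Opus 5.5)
The plan is to prove the two directions of the equivalence separately and then deduce the final sentence.

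\emph{Forward direction.} Suppose every connected component of $G$ is $\{\text{co-claw}, \text{co-gem}\}$-free, and let $F\in S$. Each graph in $S$ (\cref{fig:graphs3}) is, by construction, connected and contains a co-claw or co-gem as an induced subgraph: it is $X\cup\{w,v\}$, where $X$ induces $K_3$ or $P_4$, $v$ is anticomplete to $X$, and $w$ is a common neighbour of $v$ and some vertex of $X$. Hence an induced copy of $F$ in $G$ lies inside a single component, and that component would then contain $X\cup\{v\}$, i.e.\ an induced co-claw or co-gem. This is a contradiction, so $G$ is $S$-free.

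\emph{Reverse direction.} Suppose some component $C$ of $G$ contains an induced co-claw or co-gem on a vertex set $Y$, and let $X\subseteq Y$ induce $K_3$ or $P_4$ as in the paragraph above. The vertex of $Y\setminus X$ lies in $C$ and is anticomplete to $X$. Among all vertices of $C$ anticomplete to $X$, choose $v$ at minimum distance from $X$. This distance is at least $2$. If it were at least $3$, the next vertex on a shortest path from $v$ to $X$ would also be anticomplete to $X$ and strictly closer, contradicting the choice of $v$. So the distance is exactly $2$, and there is a neighbour $w$ of $v$ with $N(w)\cap X\neq\varnothing$.

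Then $G[X\cup\{w,v\}]$ is determined by two things: the shape of $X$ ($K_3$ or $P_4$) and the nonempty set $N(w)\cap X$. Up to symmetry there are $3$ choices when $X\cong K_3$ (sizes $1,2,3$) and a handful when $X\cong P_4$ (an end, an inner vertex, the various pairs, triples, or all four). I would check that every resulting $5$- or $6$-vertex graph appears in \cref{fig:graphs3}, possibly after passing to a smaller induced subgraph that is itself in the list. This gives an induced member of $S$, as required.

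The main obstacle is this finite case check: making sure the list in \cref{fig:graphs3} matches exactly the graphs arising from the possible neighbourhoods of $w$ in $X$, with no omitted case and with isomorphic cases identified. This is routine but must be done carefully.

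\emph{Final sentence.} Let $G$ be $S$-free. By the equivalence, every component $A$ of $G$ is $\{\text{co-claw}, \text{co-gem}\}$-free, so $\had_2(A)\ge\chi(A)$ by \cref{thm:strong_gem}. Since $\had_2$ and $\chi$ of a disjoint union are both the maximum over the components, exactly as in \cref{prop:easy}\ref{o1}, we get $\had_2(G)\ge\chi(G)$.
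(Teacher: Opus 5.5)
Your proposal is correct and is essentially the paper's argument. You choose $v$ anticomplete to $X$ at minimum distance (necessarily $2$), take a neighbour $w$ of $v$ meeting $X$, read off $G[X\cup\{w,v\}]$ from $N(w)\cap X$, and then pass to disjoint unions via \cref{thm:strong_gem}; your explicit forward direction and your allowance for dropping to a smaller member of the list are details the paper leaves implicit, and the latter is genuinely needed (e.g.\ when $X=p_1p_2p_3p_4$ and $N(w)\cap X$ is $\{p_1,p_2\}$ or $\{p_1,p_2,p_3\}$, the set $\{w,p_1,p_2,p_3,p_4\}$ already induces one of the $K_3$-case graphs).
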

\begin{figure}[htb!]
\centering
    \includegraphics[]{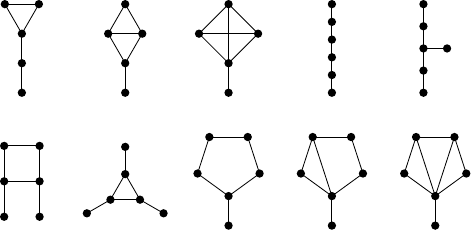}
    \caption{A forbidden induced subgraph characterization of $\Cc'$, the class of graphs whose connected components are $\{\text{co-claw},\text{co-gem}\}$-free.}
    \label{fig:graphs3}
\end{figure}

There are yet stronger results that can be easily obtained. For instance, let $S'$ be $S$ minus the third and tenth graphs in \cref{fig:graphs3}. Then one can prove that if $G$ is $S'$-free, then $\had_2(G)\ge \chi(G)$. This can be done by closing the class of $\{\text{co-claw},\text{co-gem}\}$ graphs under both disjoint unions and complete joins (disjoint unions in the complement) simultaneously. Here, we note that $\Cc'$ is not closed complete joins; a complete join of two graphs in $\Cc'$ might contain the third or tenth graph in \cref{fig:graphs3}.

\bigskip

We conclude with some further problems. \cref{cor:multigraphs} proves Hadwiger's Conjecture for complements of line graphs of triangle-free multigraphs. The first author has extended this result to all complements of line graphs of multigraphs. The proof uses some similar techniques but with substantial computer assistance; the details will appear in a future paper.
\cref{prop:largegirth} suggests the following broad problem:
\begin{problem}
\label{prob:sey}
For which hereditary classes $\Cc$ is it true that $\had_2(G) \geq \chi(G)$ for every graph $G \in \Cc$?
\end{problem}
Such a class $\Cc$ must at least avoid the construction in \cref{prop:largegirth}, so for instance $\Cc$ cannot contain odd cycles of length at least $7$. Perhaps the converse holds:
\begin{problem}
\label{prob:odd_cycle_free}
Does every $\{C_{2k+1}\}_{k\ge 3}$-free graph $G$ satisfy $\had_2(G) \geq \chi(G)$?
\end{problem}
An affirmative answer would be a substantial strengthening of the conjecture of Seymour that $\had_2(G)\ge \chi(G)$ if $G$ is $\overline{K_3}$-free~\cite{SeymourHC}. Here is an even stronger conjecture. For integer $m \geq 1$, let $\had_m(G)$ be the largest $t$ such that $G$ contains a $K_t$-model using branch sets of size at most $m$. Let $\Cc_m$ be the class of graphs $G$ such that $\had_m(H)\ge \chi(H)$ for all induced subgraphs $H$ of $G$. Then $\Cc_1$ is the class of perfect graphs, which we know is the class of $\{\text{odd hole}, \text{odd antihole}\}$-free graphs. It is easy to see that $\Cc_m$ is a subset of $\{C_{2k+1}\}_{k \ge 3m/2}$-free graphs, as $\had_m(C_\ell)$ is equal to 2 if $3m > \ell$ and 3 if $3m\le \ell$.

\begin{problem}
Is $\Cc_m$ the class of $\{C_{2k+1}\}_{k\ge 3m/2}$-free graphs for all $m\ge 2$?
\end{problem}

\section*{Acknowledgments}

Daniel Carter is supported by NSF grant DGE-2444107. Jung Hon Yip is supported by a Monash Graduate Scholarship. This work was partly done when the first author attended the 2025 Barbados Graph Theory Workshop at Bellairs Research Institute in Holetown, Barbados. We thank the organizers for providing an engaging work environment. We also thank David Wood for helpful comments on an early draft of this paper.

{
\fontsize{10pt}{11pt}
\selectfont
\bibliographystyle{bibstyle}
\bibliography{biblio}
}

\end{document}